\newcounter{num}[section] %
\newenvironment{theo}
{\refstepcounter{num}%
\bigskip\noindent{\bf Theorem~\arabic{section}.\arabic{num}. }\it}
{\smallskip}
\newenvironment{prop}
{\refstepcounter{num}%
\bigskip\noindent{\bf Proposition~\arabic{section}.\arabic{num}. }\it}
\newenvironment{cor}
{\refstepcounter{num}%
\bigskip\noindent{\bf Corollary~\arabic{section}.\arabic{num}. }\it}
\newenvironment{lemma}
{\refstepcounter{num}%
\bigskip\noindent{\bf Lemma~\arabic{section}.\arabic{num}. }\it}
\newenvironment{eq}{\begin{equation}}{\end{equation}}
\newcommand{\Ref}[1]{(\ref{#1})}
\newcommand{\si}{\sigma}
\newcommand{\al}{\alpha}
\newcommand{\be}{\beta}
\newcommand{\ga}{\gamma}
\newcommand{\un}[1]{{\underline{#1}} }
\newcommand{\tr}{\mathop{\rm tr}}
\newcommand{\mdeg}{\mathop{\rm mdeg}}
\newcommand{\algA}{\mathcal{A}}    
\newcommand{\N}{\mathcal{N}} 
\newcommand{\FF}{{\mathbb{F}}}   
\newcommand{\NN}{{\mathbb{N}}}
\newcommand{\mylabel}[1]{}
\newcommand{\mycomment}[1]{}
\begin{document}
\title[Separating invariants of three nilpotent $3\times3$ matrices]{Separating invariants of three nilpotent $3\times3$ matrices}

\thanks{The first author was partially supported by CAPES and the second author was supported by CNPq 307712/2014-1 and FAPESP 2018/23690-6}

\author{Felipe Barbosa Cavalcante}
\address{Felipe Barbosa Cavalcante\\ 
State University of Campinas, 651 Sergio Buarque de Holanda, 13083-859 Campinas, SP, Brazil}
\email{felipelamerck@hotmail.com (Felipe Barbosa Cavalcante)}

\author{Artem Lopatin}
\address{Artem Lopatin\\ 
State University of Campinas, 651 Sergio Buarque de Holanda, 13083-859 Campinas, SP, Brazil}
\email{dr.artem.lopatin@gmail.com (Artem Lopatin)}

\begin{abstract} 
The algebra $\mathcal{O}(\N_n^d)^{GL_n}$ of $GL_n$-invariants of $d$-tuple of $n\times n$ nilpotent matrices with respect to the action by simultaneous conjugation is generated by the traces of products of nilpotent generic matrices in the case of an algebraically closed  field of characteristic zero. We describe a minimal separating set for this algebra in case $n=d=3$.   

\noindent{\bf Keywords: } invariant theory, matrix invariants, general linear group,  separating invariants, generators, nilpotent matrices.

\noindent{\bf 2010 MSC: } 16R30; 15B10; 13A50.
\end{abstract}

\maketitle

\section{Introduction}\label{section_intro}

\subsection{Algebras of invariants} All vector spaces, algebras, and modules are over an algebraically closed field $\FF$ of characteristic zero unless otherwise stated.  By an algebra we always mean an associative algebra with unity.

Given $n>1$ and $d\geq 1$, the direct sum $M_n^d$ of $d$ copies of the space of $n\times n$ matrices over $\FF$ is a $GL_n$-module with respect to the diagonal action by conjugation: $g\cdot\un{A} = (g A_1 g^{-1},\ldots,g A_d g^{-1})$ for $g\in GL_n$ and $\un{A}=(A_1,\ldots,A_d)$ from $M_n^d$.  Given a matrix $A$, denote by $A_{ij}$ the $(i,j)^{\rm th}$ entry of $A$. Any element of the coordinate ring 
$$
\mathcal{O}(M_n^d)  = \FF[x_{ij}(k) \;| \;1\leq i,j\leq n, 1\leq k\leq d]
$$
of $M_n^d$ can be considered as the polynomial function $x_{ij}(k): M_n^d\to \FF$, which sends $(A_1,\ldots,A_d)$ to  $(A_k)_{ij}$.  Sibirskii~\cite{Sibirskii_1968} and Procesi~\cite{Procesi76} established that the algebra of invariants 
$$\mathcal{O}(M_n^d)^{GL_n}=\{f\in \mathcal{O}(M_n^d)\,|\,f(g\cdot \un{A})=f(\un{A}) \text{ for all }g\in GL_n,\; \un{A}\in M_n^d\}$$ 
is generated by $\tr(X_{i_1}\cdots X_{i_r})$, where $X_k$ stands for the $n\times n$ {\it generic} matrix $(x_{ij}(k))_{1\leq i,j\leq n}$. Note that over an infinite field of positive characteristic a generating set for $\mathcal{O}(M_n^d)^{GL_n}$ was described by Donkin~\cite{Donkin92a}.

Denote by $\N_n$ the affine variety of all $n\times n$ {\it nilpotent} matrices, i.e., $A\in M_n$ belongs to $\N_n$ if and only if $\tr(A)=\tr(A^2)=\cdots=\tr(A^n)=0$, or equivalently, $A^n=0$. The variety $\N_n^d$ is known to be irreducible. The coordinate ring $\mathcal{O}(\N_n^d)$ of $\N_n^d\subset M_n^d$  is generated by $y_{ij}(k)$ for $1\leq i,j\leq n$ and $1\leq k\leq d$, where  $y_{ij}(k):\N_n^d\to \FF$ sends $(A_1,\ldots,A_d)$ to $(A_k)_{ij}$. 
Denote by $J=J_{n,d}$ the ideal of all $f\in \mathcal{O}(M_n^d)$ that are zero over $\N_n^d$, i.e., $J=I(\N_n^d)$. Then $\tr(X_k^s)\in J$ for all $1\leq k,s\leq n$. We have 
$$\mathcal{O}(\N_n^d)=\mathcal{O}(M_n^d)/ J$$
and $y_{ij}(k) = x_{ij}(k) + J$. The algebra $\mathcal{O}(\N_n^d)^{GL_n}$ of {\it invariants of nilpotent matrices} is defined in the same way as $\mathcal{O}(M_n^d)^{GL_n}$. Since 
$$0 \longrightarrow J \longrightarrow  \mathcal{O}(M_n^d) \overset{\Psi}{\longrightarrow}  \mathcal{O}(\N_n^d) \longrightarrow 0$$
is a short exact sequence, then
$$0 \longrightarrow J^{GL_n} \longrightarrow  \mathcal{O}(M_n^d)^{GL_n} \overset{\hat{\Psi}}{\longrightarrow} \mathcal{O}(\N_n^d)^{GL_n} \longrightarrow 0$$
is also a short exact sequence, since $GL_n$ is a linearly reductive group. Therefore, the algebra $\mathcal{O}(\N_n^d)^{GL_n}$ is generated by $\tr(Y_{i_1}\cdots Y_{i_r})$, where $Y_k=(y_{ij}(k))_{1\leq i,j\leq n}$ is the {\it generic nilpotent} $n\times n$ matrix. Obviously, $\tr(Y_k^s)=0$ for any $s>0$ and $Y_k^n=0$. Let us remark that in case $d=1$ we have $\mathcal{O}(\N_n^1)^{GL_n}=\FF$.

\subsection{Separating invariants}
\label{section_separ}
In 2002 Derksen and Kemper~\cite{DerksenKemper_book} (see~\cite{DerksenKemper_bookII} for the second edition) introduced the notion of separating invariants as a weaker concept than generating invariants.  Assume that $W$ is $M_n^d$ or $\N_n^d$. Given a subset $S$ of $\mathcal{O}(W)^{GL_n}$, we say that elements $u,v$ of $W$ {\it are separated by $S$} if  exists an invariant $f\in S$ with $f(u)\neq f(v)$. If  $u,v\in W$ are separated by $\mathcal{O}(W)^{GL_n}$, then we simply say that they {\it are separated}. A subset $S\subset \mathcal{O}(W)^{GL_n}$ of the invariant ring is called {\it separating} if for any $v, w$ from $W$ that are separated we have that they are separated by $S$. We say that a separating set is minimal if it is minimal w.r.t.~inclusion. Obviously, any generating set is also separating. Thus the cases when some separating set does not generate the algebra of invariants are of special interest. 

For a monomial $c\in \mathcal{O}(M_n^d)$ denote by $\deg{c}$ its {\it degree} and by $\mdeg{c}$ its {\it multidegree}, i.e., $\mdeg{c}=(t_1,\ldots,t_d)$, where $t_k$ is the total degree of the monomial $c$ in $x_{ij}(k)$, $1\leq i,j\leq n$, and $\deg{c}=t_1+\cdots+t_d$. Then the algebras $\mathcal{O}(M_n^d)^{GL_n}$ and $\mathcal{O}(\N_n^d)^{GL_n}$ have $\NN$-grading by degrees and $\NN^d$-grading by multidegrees, where $\NN$ stands for the set of non-negative integers. We say that a multidegree $(t_1,\ldots,t_d)$ is less than a multidegree $(k_1,\ldots,k_d)$ if $t_i\leq k_i$ for all $i$ and $t_j< k_j$ for some $j$.

Given an algebra of invariants $\mathcal{O}(W)^{GL_n}$, denote by $D_{\rm gen}$ ($D_{\rm sep}$, respectively) the minimal integer $D$ such that the set of all homogeneous invariants of $\mathcal{O}(W)^{GL_n}$ of degree less or equal to $D$ is a generating set (separating set, respectively). Note that $D_{\rm gen}$ is the maximal degree of elements of any minimal (w.r.t.~inclusion) generating set. Since $\hat{\Psi}$ is surjective, we obtain that 
$$  D_{\rm gen}(\mathcal{O}(\N_n^d)^{GL_n}) \leq D_{\rm gen}(\mathcal{O}(M_n^d)^{GL_n}) \;\;\;\text{and}\;\;\; 
D_{\rm sep}(\mathcal{O}(\N_n^d)^{GL_n}) \leq D_{\rm sep}(\mathcal{O}(M_n^d)^{GL_n}).
$$
 
In case of an infinite field of arbitrary characteristic the following minimal separating set for $\mathcal{O}(M_2^d)^{GL_2}$ was given by Kaygorodov, Lopatin, Popov~\cite{Lopatin_separating2x2}:
\begin{eq}\label{eq0}
\tr(X_i^2),\, 1\leq i\leq d;\; 
\tr(X_{i_1}\cdots X_{i_k}),\, k\in\{1,2,3\},\, 1\leq i_1<\cdots<i_k\leq d.
\end{eq}%
Note that set~\Ref{eq0} generates the algebra $\mathcal{O}(M_2^d)^{GL_2}$ if and only if the characteristic of $\FF$ is different from two or $d\leq 3$ (see~\cite{Procesi_1984,DKZ_2002}). A minimal generating set for $\mathcal{O}(M_3^d)^{GL_3}$ was given by Lopatin in~\cite{Lopatin_Sib, Lopatin_Comm1,Lopatin_Comm2} in the case of an arbitrary infinite field. Over a field of characteristic zero a minimal generating set for $\mathcal{O}(M_n^2)^{GL_n}$ was established by Drensky and Sadikova~\cite{Drensky_Sadikova_4x4} in case $n=4$ (see also~\cite{Teranishi_1986}) and by \DJ{}okovi\'c~\cite{Djokovic2007} in case $n=5$ (see also~\cite{Djokovic2009}). 

For the algebra $\mathcal{O}(M_n^d)^{GL_n}$ Derksen and Makam~\cite{DerksenMakam4}  established that $D_{\rm gen}\leq (d+1)n^4$ and $D_{\rm sep}\leq n^6$ in case of an arbitrary algebraically closed field. The second bound was improved in~\cite{DerksenMakam5}, where it was proven that $D_{\rm sep}\leq 4n^2 \log_2(n) + 12 n^2 - 4 n$. Moreover, over an algebraically closed field of zero characteristic it was shown in~\cite{DerksenMakam5} that  $D_{\rm sep}\leq 4n \log_2(n) + 12 n - 4$. 

We can consider $M_n^d$ as an $SL_n \times SL_n$-module with respect to the diagonal left-right action:
$(g_1,g_2)\cdot \un{A} = (g_1 A_1 g_2^{-1},\ldots,g_1 A_d g_2^{-1})$ for all $g_1,g_2\in SL_n$ and $\un{A}\in M_n^d$. A minimal separating set for the algebra of matrix semi-invariants $\mathcal{O}(M_2^d)^{SL_2\times SL_2}$ was explicitly described by Domokos~\cite{Domokos2020} over an arbitrary algebraically closed field. Note that a minimal generating set for $\mathcal{O}(M_2^d)^{SL_2\times SL_2}$ was given by Lopatin~\cite{Lopatin_semi2222} over an arbitrary infinite field (see also~\cite{Domokos2020Add}). Upper bounds on $D_{\rm gen}$ and $D_{\rm sep}$ were given by Derksen and Makam in~\cite{DerksenMakam2, DerksenMakam5, DerksenMakam6}. Note that there is a surjective homomorphism  $\sigma^{\ast}:\mathcal{O}(M_n^d)^{SL_n\times SL_n} \to \mathcal{O}(M_n^{d-1})^{GL_n}$ of algebras sending separating sets to separating sets (see Corollary 6.3 of~\cite{Domokos2020}).

\subsection{Results}
The main result of our paper is the description of a minimal generating set and a minimal separating set for the algebra of invariants  $\mathcal{O}(\N_3^3)^{GL_3}$  (see Theorem~\ref{theo3x3three}). We also show that $D_{\rm sep}(\mathcal{O}(\N_3^d)^{GL_3})=6$ for all $d\geq2$ (see Corollary~\ref{cor}).

The paper is organized as follows. In Section~\ref{section_two_cases} the cases of $\mathcal{O}(\N_2^d)^{GL_2}$ for $d>0$ and  $\mathcal{O}(\N_3^2)^{GL_3}$ are considered. The proof of  Theorem~\ref{theo3x3three} is divided into several lemmas. The statement of Theorem~\ref{theo3x3three} about generators is proven in Section~\ref{section_3x3}. To prove the statement of  Theorem~\ref{theo3x3three} about separating set we describe certain elements of  $GL_3$-orbits on $\N_3^2$ in Section~\ref{section_Canon} (although we do not obtain classification of these orbits) and then complete the proof of key  Lemma~\ref{lemmaBig} in Sections~\ref{sectionCaseA}--\ref{sectionCaseDE}.

\subsection{Notations}
\label{section_notations}


Denote by $E$ the identity matrix and by $E_{ij}$ the matrix such that the $(i,j)^{\rm th}$ entry is equal to one and the rest of entries are zeros. We use the following notations for $3\times 3$ nilpotent matrices: $J_1=E_{12}$ and $J_2=E_{12}+E_{23}$.

Assume that $\algA$ is  $\mathcal{O}(M_n^d)^{GL_n}$ or $\mathcal{O}(\N_n^d)^{GL_n}$. We say that an $\NN$-homogeneous invariant $f\in \algA$ is {\it decomposable} and write $f\equiv0$ if $f$ is a polynomial in $\NN$-homogeneous invariants of $\algA$ of strictly lower degree. If $f$ is not decomposable, then we say that $f$ is {\it indecomposable} and write $f\not\equiv0$. In case $f-h\equiv0$ we write $f\equiv h$. Obviously, if $f\equiv0$ for $f\in \mathcal{O}(M_n^d)^{GL_n}$, then $\Psi(f)\equiv0$ in $\mathcal{O}(\N_n^d)^{GL_n}$.

\section{Cases of $2\times 2$ matrices and two $3\times 3$ matrices }\label{section_two_cases}

\begin{prop}\label{prop2x2}
Assume that $n=2$. Then the set 
$$S_{2,d}=\{\tr(Y_i Y_j), \; 1\leq i<j\leq d;\;\; \tr(Y_i Y_j Y_k), \; 1\leq i<j<k\leq d\}$$
is a minimal generating set as well as a minimal separating set for the algebra of invariants  $\mathcal{O}(\N_2^d)^{GL_2}$.  
\end{prop}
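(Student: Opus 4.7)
The plan is to derive the generating statement from the known description of $\mathcal{O}(M_2^d)^{GL_2}$ via the surjection $\hat\Psi$, then verify minimality in each of the two senses separately; the main obstacle will be the separating-minimality of the triple-trace generators, which requires decoupling them from all pairwise traces simultaneously.

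In characteristic zero, Sibirskii and Procesi provide a homogeneous generating set for $\mathcal{O}(M_2^d)^{GL_2}$ consisting of $\tr(X_i)$ and $\tr(X_i^2)$ for $1\le i\le d$, together with $\tr(X_iX_j)$ for $i<j$ and $\tr(X_iX_jX_k)$ for $i<j<k$; longer words reduce by the $2\times 2$ Cayley-Hamilton identity. Applying $\hat\Psi$ and using $\tr(Y_i)=\tr(Y_i^2)=0$ for nilpotent $Y_i$, the image of this set is precisely $S_{2,d}$, hence $S_{2,d}$ generates $\mathcal{O}(\N_2^d)^{GL_2}$ and in particular is separating. For minimality as a generating set I would use the $\NN^d$-grading: since $\mathcal{O}(\N_2^d)^{GL_2}$ has no nonzero homogeneous invariants of degree $1$ (all $\tr(Y_i)$ vanish), both $\tr(Y_iY_j)$ of degree $2$ and $\tr(Y_iY_jY_k)$ of degree $3$ are indecomposable, as writing them in terms of strictly lower-degree invariants would require a degree-$1$ factor.

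For minimality as a separating set I remove one element $f\in S_{2,d}$ at a time and produce $\un{A},\un{B}\in\N_2^d$ separated by $f$ but not by $S_{2,d}\setminus\{f\}$. If $f=\tr(Y_iY_j)$, take $A_i=E_{12}$, $A_j=E_{21}$, $A_l=0$ otherwise, and $\un{B}=0$: then $\tr(A_iA_j)=1$, while every other pairwise trace in $S_{2,d}$ involves a zero coordinate and every triple trace vanishes since $\un{A}$ has only two nonzero entries.

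The crux is the case $f=\tr(Y_iY_jY_k)$. I would first establish that the triple trace is totally antisymmetric on $\N_2^d$: applying Cayley-Hamilton to the trace-zero matrix $Y_a+Y_b$ and using $Y_a^2=Y_b^2=0$ gives the identity $Y_aY_b+Y_bY_a=\tr(Y_aY_b)\,E$, and multiplying by $Y_i$ and taking traces yields $\tr(Y_iY_jY_k)+\tr(Y_iY_kY_j)=\tr(Y_jY_k)\tr(Y_i)=0$. I would then choose $A_i=E_{12}$, $A_j=E_{21}$, $A_k=\begin{pmatrix}1 & 1\\ -1 & -1\end{pmatrix}$ (nilpotent, and giving $\tr(A_iA_jA_k)=1$), with $A_l=0$ for $l\notin\{i,j,k\}$, and set $\un{B}=(A_1^T,\ldots,A_d^T)$. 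Transposition preserves nilpotency and every pairwise trace, while $\tr(B_iB_jB_k)=\tr((A_kA_jA_i)^T)=\tr(A_iA_kA_j)=-\tr(A_iA_jA_k)=-1$; triple traces indexed by other triples vanish on both tuples since those triples include a zero coordinate. Thus $f$ is the unique element of $S_{2,d}$ separating $\un{A}$ and $\un{B}$, which completes the argument. The conceptual obstacle here is recognizing the transposition involution as the symmetry that preserves every pairwise trace while flipping the sign of every triple trace.
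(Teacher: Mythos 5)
Your proof is correct, and its skeleton matches the paper's: generation is obtained by applying $\hat{\Psi}$ to the known generating set of $\mathcal{O}(M_2^d)^{GL_2}$, and separating-minimality by exhibiting, for each $f\in S_{2,d}$, a pair of tuples separated by $f$ but by no other element of the set. The differences lie in how the two minimality statements are organized. The paper proves only separating-minimality (its Claims 1 and 2, using the pairs $(E_{12},E_{12})$ vs.\ $(E_{12},E_{21})$ and $(E_{12},-E_{21},C)$ vs.\ $(E_{12},C,-E_{21})$ with $C=E_{11}+E_{12}-E_{21}-E_{22}$) and then gets generating-minimality for free: if a proper subset generated, it would separate. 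You instead argue generating-minimality independently via the grading; that is valid, but be aware that indecomposability of each element alone would not suffice if two elements of $S_{2,d}$ shared a multidegree (one could equal another plus decomposables) — your appeal to the $\NN^d$-grading is what closes this, since each multidegree occurs at most once in $S_{2,d}$, and it deserves to be said explicitly, along with the fact that each element is a nonzero function (which your evaluations provide). For the triple-trace counterexample the paper permutes two coordinates of a concrete triple, while you prove the identity $Y_aY_b+Y_bY_a=\tr(Y_aY_b)E$ on $\N_2$, deduce antisymmetry of $\tr(Y_iY_jY_k)$, and apply the transpose involution, which preserves all pairwise traces and flips the sign of all triple traces. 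These are two implementations of the same underlying fact — on nilpotent $2\times 2$ tuples the triple trace is alternating while pairwise traces are symmetric — but your version is more structural: it explains why such pairs must exist for every triple $(i,j,k)$ and every $d$, and it makes the padding-by-zeros step for general $d$ explicit, which the paper leaves implicit when it passes from Claims 1 and 2 (stated for $d=2,3$) to all $d$.
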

\begin{proof}   Applying the surjective homomorphism $\hat{\Psi}$ to the known  minimal generating set for $\mathcal{O}(M_2^d)^{GL_2}$ (see Section~\ref{section_separ}) we obtain that $S_{2,d}$ generates the algebra $\mathcal{O}(\N_2^d)^{GL_2}$. Then $S_{2,d}$ is also a separating set. Claims 1 and 2 (see below) show that $S_{2,d}$ is a {\it minimal} separating set  for all $d>0$, and therefore it is a minimal generating set. 

\medskip
\noindent{}{\it Claim 1.} Let $d=2$. Then $S_{2,2}\backslash \{\tr(Y_1 Y_2)\}$ is not separating for $\mathcal{O}(\N_2^2)^{GL_2}$.
\smallskip

To prove this claim consider $\un{A}=(E_{12},E_{12})$ and $\un{B}=(E_{12},E_{21})$ from $\N_2^2$. Then $\tr(A_1 A_2)\neq \tr(B_1 B_2)$.

\medskip
\noindent{}{\it Claim 2.} Let $d=3$. Then $S_{2,3}\backslash \{\tr(Y_1 Y_2 Y_3)\}$ is not separating for $\mathcal{O}(\N_2^3)^{GL_2}$.
\smallskip

To prove this claim we consider $\un{A}=(E_{12},-E_{21}, C)$ and $\un{B}=(E_{12},C,-E_{21})$ from $\N_2^3$, where $C=E_{11}+E_{12}-E_{21}-E_{22}$. Then $\tr(A_i A_j)=\tr(B_i B_j)$ for all $1\leq i<j\leq 3$, but $\tr(A_1 A_2 A_3)\neq \tr(B_1 B_2 B_3)$.
\end{proof}

\begin{prop}\label{prop3x3two}
Assume that $n=3$ and $d=2$. Then the set 
$$S_{3,2}=\{\tr(Y_1 Y_2), \;\tr(Y_1^2 Y_2), \;\tr(Y_1 Y_2^2), \;\tr(Y_1^2 Y_2^2), \;\tr(Y_1^2 Y_2^2Y_1 Y_2)\}$$
is a minimal generating set as well as a minimal separating set for the algebra of invariants  $\mathcal{O}(\N_3^2)^{GL_3}$.  
\end{prop}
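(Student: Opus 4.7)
The plan follows the template of Proposition~2.1: first show $S_{3,2}$ generates $\mathcal{O}(\N_3^2)^{GL_3}$ (whence it separates), then show that removing any one of the five elements breaks the separating property. Minimality as a generating set will then follow formally, since any redundant generator would in particular be a redundant separating invariant, contradicting the minimality as a separating set.

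For generation, invoke the classical minimal generating set of $\mathcal{O}(M_3^2)^{GL_3}$ in characteristic zero, consisting of the eleven invariants $\tr(X_i),\tr(X_i^2),\tr(X_i^3)$ for $i=1,2$ together with $\tr(X_1X_2),\tr(X_1^2X_2),\tr(X_1X_2^2),\tr(X_1^2X_2^2),\tr(X_1^2X_2^2X_1X_2)$. The first six lie in $J$, since any power of a nilpotent $3\times 3$ matrix has zero trace, while the remaining five map under $\hat{\Psi}$ exactly to the elements of $S_{3,2}$. Surjectivity of $\hat{\Psi}$ then yields that $S_{3,2}$ generates $\mathcal{O}(\N_3^2)^{GL_3}$.

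For minimality as a separating set, for each $f_i\in S_{3,2}$ I must exhibit $\un{A},\un{B}\in\N_3^2$ with $f_j(\un{A})=f_j(\un{B})$ for $j\neq i$ and $f_i(\un{A})\neq f_i(\un{B})$. The search is organized by two observations: the five generators sit in the distinct multidegrees $(1,1),(2,1),(1,2),(2,2),(3,3)$; and if $A_k^2=0$ (i.e.~$A_k$ is zero or conjugate to $J_1$) then every invariant in which $Y_k$ appears with exponent at least $2$ vanishes automatically. Using this, for $f_1$ take $\un{A}=(E_{12},E_{21})$ versus $\un{B}=(E_{12},E_{23})$; for $f_2$ take $\un{A}=(J_2,E_{31})$ versus $\un{B}=(J_2,0)$, using $J_2E_{31}=0$ and $E_{31}^2=0$; the case $f_3$ is obtained from $f_2$ by the involution swapping $Y_1,Y_2$.

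The main obstacle is the cases $f_4$ and $f_5$, where both components must be conjugate to $J_2$ and three (respectively four) further invariants must be pinned simultaneously. The key computation is that for $A_1=J_2$ and $A_2=\alpha E_{21}+\beta E_{32}$ the quintuple reads $(f_1,f_2,f_3,f_4,f_5)=(\alpha+\beta,\,0,\,0,\,\alpha\beta,\,\alpha^2\beta)$. The map $(\alpha,\beta)\mapsto(\alpha+\beta,\alpha\beta,\alpha^2\beta)$ is not injective: the swap $\alpha\leftrightarrow\beta$ preserves the two symmetric functions but sends $\alpha^2\beta$ to $\alpha\beta^2$, producing a pair differing only in $f_5$ (take $\alpha=1,\beta=-1$ versus $\alpha=-1,\beta=1$, giving $f_5=\mp 1$); while fixing $f_1$ and $f_5$ and choosing two distinct roots $\alpha$ of the cubic $\alpha^3-f_1\alpha^2+f_5=0$ produces pairs differing only in $f_4=\alpha\beta$ (take $\alpha=\beta=1$ versus $\alpha=(1+\sqrt{5})/2,\beta=(3-\sqrt{5})/2$, both satisfying $f_1=2,f_5=1$ but giving $f_4=1$ and $f_4=(\sqrt{5}-1)/2$). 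With the five pairs in hand, $S_{3,2}$ is minimal as a separating set, and by the preceding remark also as a generating set.
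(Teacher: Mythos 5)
Your proposal is correct and follows essentially the same route as the paper: generation is obtained by applying the surjection $\hat{\Psi}$ to the classical eleven-element minimal generating set of $\mathcal{O}(M_3^2)^{GL_3}$, and minimality as a separating (hence, by the same formal argument, generating) set is obtained from explicit pairs in $\N_3^2$ that agree on four of the five invariants; the paper merely uses different witness matrices, and your parametric family $(J_2,\alpha E_{21}+\beta E_{32})$ with invariants $(\alpha+\beta,0,0,\alpha\beta,\alpha^2\beta)$ is a clean alternative for the two hard cases. Two harmless slips you should patch: $J_2E_{31}=E_{21}\neq 0$ (what you actually need is $\tr(J_2E_{31})=0$, which holds), and the swap $Y_1\leftrightarrow Y_2$ does \emph{not} fix $\tr(Y_1^2Y_2^2Y_1Y_2)$ (it sends it to $\tr(Y_2^2Y_1^2Y_2Y_1)$, a genuinely different invariant, their difference being $\tfrac{1}{3}\tr([Y_1,Y_2]^3)$), but your swapped pair $(E_{31},J_2)$ versus $(0,J_2)$ still checks out directly because $E_{31}^2=0$ kills every invariant involving $Y_1^2$.
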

\begin{proof} Applying the surjective homomorphism $\hat{\Psi}$ to the known  minimal generating set for $\mathcal{O}(M_3^2)^{GL_3}$ (see Theorem~1 of~\cite{Lopatin_Sib}) we obtain that $S_{3,2}$ generates the algebra $\mathcal{O}(\N_3^2)^{GL_3}$. Then $S_{3,2}$ is also a separating set. To complete the proof it is enough to show that $S_{3,2}$ is a {\it minimal} separating set, i.e., for any $f\in S_{3,2}$ it is enough to show that $S_{3,2}\backslash \{f\}$ is not separating. We will construct $\un{A}=(A_1,A_2)$ and $\un{B}=(B_1,B_2)$ from $\N_3^2$ such that for any $h\in S_{3,2}\backslash \{f\}$ we have $h(\un{A})=h(\un{B})$, but $f(\un{A})\neq f(\un{B})$.

For $f=\tr(Y_1 Y_2)$, we consider $A_1=B_1=J_2$, $A_2=E_{32}$, and $B_2=E_{12}$.

For $f=\tr(Y_1^2 Y_2)$, we consider $A_1=B_1=J_2$, 
$$A_2=\left(
\begin{array}{ccc}
0 & 1 & 0 \\
1 & 0 & -1 \\
0 & 1 & 0 \\
\end{array}
\right),\;\text{ and }\;
B_2=\left(
\begin{array}{ccc}
0 & 0 & 0 \\
1 & 0 & 0 \\
-1 & 1 & 0 \\
\end{array}
\right).
$$

For $f=\tr(Y_1^2 Y_2^2)$, we consider $A_1=B_1=J_2$,
$$A_2=\left(
\begin{array}{ccc}
0 & -2 & 1 \\
2 & 0 & 1 \\
2 & 2 & 0 \\
\end{array}
\right),\;\text{ and }\;
B_2=\left(
\begin{array}{ccc}
0 & 0 & 2 \\
0 & 0 & -1 \\
2 & 4 & 0 \\
\end{array}
\right).
$$

For $f=\tr(Y_1^2 Y_2^2 Y_1 Y_2)$, we consider $A_1=B_1=J_2$,
$$A_2=\left(
\begin{array}{ccc}
0 & 1 & 0 \\
0 & 0 & 0 \\
1 & 1 & 0 \\
\end{array}
\right),\;\text{ and }\;
B_2=\left(
\begin{array}{ccc}
0 & 0 & -1 \\
0 & 0 & 1 \\
1 & 1 & 0 \\
\end{array}
\right).
$$
\end{proof}

\begin{cor}\label{cor} Assume that $d\geq2$. Then
\begin{enumerate}
\item[(a)]  for the algebra $\mathcal{O}(M_3^d)^{GL_3}$ we have $D_{\rm sep} = 6$, where the field $\FF$ is arbitrary of characteristic different from $2$ and $3$;

\item[(b)] for the algebra $\mathcal{O}(\N_3^d)^{GL_3}$ we have $D_{\rm gen} = D_{\rm sep} = 6$.
\end{enumerate}
\end{cor}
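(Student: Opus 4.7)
\begin{proof_of}{of Corollary~\ref{cor}}
The plan is to derive both statements from results already established in the paper combined with the classical fact---proven by Lopatin in~\cite{Lopatin_Sib,Lopatin_Comm1,Lopatin_Comm2} over any infinite field with $\Char \FF \neq 2,3$---that a minimal generating set of $\mathcal{O}(M_3^d)^{GL_3}$ has maximum degree $6$, i.e.\ $D_{\rm gen}(\mathcal{O}(M_3^d)^{GL_3})=6$.

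For the upper bounds, since every generating set is separating, $D_{\rm sep}\leq D_{\rm gen}$, which immediately gives $D_{\rm sep}(\mathcal{O}(M_3^d)^{GL_3})\leq 6$ in part (a). For part (b) the $\NN$-graded surjection $\hat{\Psi}\colon\mathcal{O}(M_3^d)^{GL_3}\to\mathcal{O}(\N_3^d)^{GL_3}$ from Section~\ref{section_intro} sends a minimal generating set to a generating set, so $D_{\rm gen}(\mathcal{O}(\N_3^d)^{GL_3})\leq 6$ and hence $D_{\rm sep}(\mathcal{O}(\N_3^d)^{GL_3})\leq D_{\rm gen}(\mathcal{O}(\N_3^d)^{GL_3})\leq 6$.

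For the lower bounds, the plan is to reuse the last pair $\un{A},\un{B}\in\N_3^2$ constructed in the proof of Proposition~\ref{prop3x3two} (the one associated with $f=\tr(Y_1^2Y_2^2Y_1Y_2)$), and to embed it into $\N_3^d\subset M_3^d$ as $(A_1,A_2,0,\ldots,0)$ and $(B_1,B_2,0,\ldots,0)$. Every indecomposable generator of $\mathcal{O}(M_3^d)^{GL_3}$ is a trace $\tr(X_{i_1}\cdots X_{i_k})$; on such an embedded tuple it vanishes unless every index $i_j$ lies in $\{1,2\}$, and in that case, using that $A_1,A_2,B_1,B_2$ are nilpotent $3\times 3$ matrices and hence $\tr(A_i^s)=\tr(B_i^s)=0$ for all $s\geq 1$, it also vanishes whenever one of $X_1,X_2$ is absent from the word. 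Inspecting the minimal generating set of $\mathcal{O}(M_3^2)^{GL_3}$ then shows that the surviving indecomposable generators of degree at most $5$ are exactly $\tr(X_1X_2),\tr(X_1^2X_2),\tr(X_1X_2^2),\tr(X_1^2X_2^2)$, all of which agree on $\un{A},\un{B}$ by the explicit construction in Proposition~\ref{prop3x3two}. Thus every invariant of degree $\leq 5$ agrees on the two embedded tuples, while $\tr(X_1^2X_2^2X_1X_2)$ separates them, yielding $D_{\rm sep}\geq 6$ in both (a) and (b). The remaining lower bound $D_{\rm gen}\geq 6$ in (b) follows from the $\NN$-graded surjection $\mathcal{O}(\N_3^d)^{GL_3}\to\mathcal{O}(\N_3^2)^{GL_3}$ given by $Y_i\mapsto 0$ for $i>2$: any decomposition of $\tr(Y_1^2Y_2^2Y_1Y_2)$ into invariants of strictly smaller degree in $\mathcal{O}(\N_3^d)^{GL_3}$ would descend to such a decomposition in $\mathcal{O}(\N_3^2)^{GL_3}$, contradicting Proposition~\ref{prop3x3two}.

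The only substantive point is the finite case analysis of degree-$\leq 5$ indecomposable generators inside the lower bound. This is routine once one observes that nilpotency forces $\tr(X_i^s)=0$ for $s=1,2,3$, which cuts the relevant generators down to the four listed above; checking the cited formulas for $\un{A},\un{B}$ then closes the argument.
\end{proof_of}
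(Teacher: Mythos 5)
Your proposal is correct and takes essentially the same route as the paper: the upper bounds come from Lopatin's degree-$6$ generation of $\mathcal{O}(M_3^d)^{GL_3}$ together with the surjection $\hat{\Psi}$, and the lower bounds come from the witness pair constructed in Proposition~\ref{prop3x3two} for $\tr(Y_1^2Y_2^2Y_1Y_2)$, using gradedness (plus vanishing of pure traces on nilpotent matrices) to see that no invariant of degree at most $5$ separates it. Your only departures are presentational: you spell out the embedding of this pair into $\N_3^d$ for general $d\geq 2$, and you handle the $M_3^d$ case directly from trace-word generators rather than via the paper's remark that $S'_{3,2}$ is a minimal separating set, both points the paper leaves implicit.
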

\begin{proof} Since it is well-known that $\mathcal{O}(M_3^d)^{GL_3}$ is generated by homogeneous invariants of degree less or equal to $6$ over an arbitrary field of characteristic different from $2$ and $3$ (for example, see~\cite{Lopatin_Comm1}), we have $D_{\rm sep}(\mathcal{O}(M_3^d)^{GL_3}) \leq 6$ and $D_{\rm sep}(\mathcal{O}(\N_3^d)^{GL_3}) \leq D_{\rm gen}(\mathcal{O}(\N_3^d)^{GL_3}) \leq 6$. Proposition~\ref{prop3x3two} shows that any homogeneous invariant of $\mathcal{O}(\N_3^2)^{GL_3}$ of degree less than $6$ belongs to the subalgebra generated by $S_{3,2}\backslash\{\tr(Y_1^2 Y_2^2Y_1 Y_2)\}$. Thus, $D_{\rm sep}(\mathcal{O}(\N_3^2)^{GL_3})>5$ and the proof of part (b) is completed.  

Assume that $\FF$ is an arbitrary field of characteristic different from $2$ and $3$. Then the set
$$S'_{3,2} = S_{3,2}|_{Y_1\to X_1, Y_2\to X_2} \bigcup\; \{\tr(X_i^k)\,|\,i=1,2,\; k=1,2,3\}$$
is known to be a minimal generating set for $\mathcal{O}(M_3^d)^{GL_3}$ (for example, see~\cite{Lopatin_Comm1}) and repeating the proof of Proposition~\ref{prop3x3two} we can see that $S'_{3,2}$ is a minimal separating set for $\mathcal{O}(M_3^d)^{GL_3}$. The proof of part (a) is concluded similarly to part (b).
 \end{proof}

\section{The case of three $3\times 3$ nilpotent matrices}\label{section_3x3}

Denote by $S_{3,3}$ the following subset of $\mathcal{O}(\N_3^3)^{GL_3}$:
$$\tr(Y_i Y_j), \;\tr(Y_i^2 Y_j), \;\tr(Y_i Y_j^2), \;\tr(Y_i^2 Y_j^2), \;\tr(Y_i^2 Y_j^2 Y_i Y_j) \text{ with } 1\leq i<j\leq 3,$$
$$\tr(Y_1 Y_2 Y_3),\; \tr(Y_1 Y_3 Y_2)$$
$$ \tr(Y_i^2 Y_j Y_k)\text{ with } \{i,j,k\}=\{1,2,3\},$$
$$ \tr(Y_1^2 Y_2 Y_1 Y_3),\; \tr(Y_2^2 Y_1 Y_2 Y_3),\; \tr(Y_3^2 Y_1 Y_3 Y_2).$$
We set $P_{3,3}=S_{3,3}\sqcup P'_{3,3}$, where $P'_{3,3}\subset \mathcal{O}(\N_3^3)^{GL_3}$ is the following set:
$$\tr(Y_i^2 Y_j^2 Y_k),\; \tr(Y_i^2 Y_j^2 Y_i Y_k) \text{ with } \{i,j,k\}=\{1,2,3\},
\; \tr(Y_1^2 Y_2^2 Y_3^2).$$
Note that $|S_{3,3}|=26$ and $|P_{3,3}|=39$.

\begin{remark}\label{remarkP33}
Applying the surjective homomorphism $\hat{\Psi}$ to the  minimal generating set of $\mathcal{O}(M_3^3)^{GL_3}$ from Theorem~1 of~\cite{Lopatin_Sib} we obtain that $P_{3,3}$ generates the algebra $\mathcal{O}(\N_3^3)^{GL_3}$.
\end{remark}

\begin{theo}\label{theo3x3three}
Assume that $n=d=3$ and the field $\FF$ is algebraically closed of characteristic zero. Then 
\begin{enumerate}
\item[(a)] $P_{3,3}$ is a  minimal generating set for the algebra of invariants $\mathcal{O}(\N_3^3)^{GL_3}$. 

\item[(b)] $S_{3,3}$ is a  minimal separating set for the algebra of invariants  $\mathcal{O}(\N_3^3)^{GL_3}$;
\end{enumerate}
\end{theo}
\begin{proof}
We split the proof into several lemmas. Remark~\ref{remarkP33} and   Lemmas~\ref{lemmaSepMin} and ~\ref{lemmaInd} (see below) imply that  $P_{3,3}=S_{3,3}\sqcup P'_{3,3}$ satisfies conditions (a) and (b) of Remark~\ref{remarkMinGen} (see below), which implies that $P_{3,3}$ is a {\it minimal} generating set for $\mathcal{O}(\N_3^3)^{GL_3}$.    

To complete the proof it is enough to show that $S_{3,3}$ is a  separating set for  $\mathcal{O}(\N_3^3)^{GL_3}$, since the minimality follows from Lemma~\ref{lemmaSepMin}. Thus the fact that $P_{3,3}$ is separating together with Lemma~\ref{lemmaBig} (see below) completes the proof.
\end{proof}

\begin{remark}\label{remarkMinGen}  Let $\algA$ be  $\mathcal{O}(M_n^d)^{GL_n}$ or $\mathcal{O}(\N_n^d)^{GL_n}$. Assume that a homogeneous set $P=P_1\cup P_2$ generates the algebra $\algA$ and $P_1, P_2\subset \algA$ satisfy  the following conditions:
\begin{enumerate}
\item[(a)] $P\backslash \{f\}$ is not a separating set for $\algA$ for any $f\in P_1$;

\item[(b)] any non-trivial linear combination of elements $f_1,\ldots,f_m\in P_2$ of the same multidegree is not decomposable in  $\algA$.
\end{enumerate}
Then $P$ is a minimal generating set for  $\algA$.
\end{remark}
\begin{proof} If $P$ is not a minimal generating set for  $\algA$, then there exists an $f\in P$ such that $P\backslash \{f\}$ is also a generating set for $\algA$. But condition (a) implies that $f$ does not belong to $P_1$ and  condition (b) implies that $f$ does not belong to $P_2$; a contradiction. 
\end{proof}

\begin{lemma}\label{lemmaSepMin}  For any $f\in S_{3,3}$ the set $P_{3,3}\backslash \{f\}$ is not separating for $\mathcal{O}(\N_3^3)^{GL_3}$. In particular, any proper subset of $S_{3,3}$ is not a separating set for $\mathcal{O}(\N_3^3)^{GL_3}$. 
\end{lemma}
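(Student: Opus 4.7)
The plan is to exhibit, for each $f \in S_{3,3}$, an explicit pair $\underline{A},\underline{B}\in\mathcal{N}_3^3$ with $f(\underline{A})\ne f(\underline{B})$ while every other $h\in P_{3,3}\setminus\{f\}$ takes the same value on the two triples. This exactly mimics the constructions in Propositions~\ref{prop2x2} and~\ref{prop3x3two}, and the resulting pairs verify both claims in the lemma at once: the ``in particular'' assertion follows because any proper subset of $S_{3,3}$ omits some $f$, and then $P_{3,3}\setminus\{f\}\supsetneq S_{3,3}\setminus\{f\}$ is already non-separating.

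The 15 invariants in $S_{3,3}$ involving only two of the three indices (the orbits $\tr(Y_iY_j)$, $\tr(Y_i^2Y_j)$, $\tr(Y_iY_j^2)$, $\tr(Y_i^2Y_j^2)$, $\tr(Y_i^2Y_j^2Y_iY_j)$ with $i<j$) can be handled uniformly. For such an $f$ supported on $\{i,j\}$ with $k$ the remaining index, put the zero matrix in the $k$-th coordinate and use the two-matrix pair from Proposition~\ref{prop3x3two} in the other two coordinates. Every element of $P_{3,3}$ that contains at least one $Y_k$-factor vanishes on both triples; inspection of $P_{3,3}=S_{3,3}\sqcup P'_{3,3}$ shows that the elements with no $Y_k$-factor are precisely the five traces forming $S_{3,2}$ (under the identification $Y_1,Y_2\mapsto Y_i,Y_j$), and Proposition~\ref{prop3x3two} guarantees that these agree on the pair except for $f$ itself.

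The remaining eleven invariants genuinely involve all three matrices: the two trilinear traces $\tr(Y_1Y_2Y_3),\tr(Y_1Y_3Y_2)$; the six quartic traces $\tr(Y_i^2Y_jY_k)$ with $\{i,j,k\}=\{1,2,3\}$; and the three quintic traces $\tr(Y_1^2Y_2Y_1Y_3)$, $\tr(Y_2^2Y_1Y_2Y_3)$, $\tr(Y_3^2Y_1Y_3Y_2)$. I would use the $S_3$-action on $\mathcal{N}_3^3$ by permuting coordinates to cut down the case list: whenever $\sigma\in S_3$ sends $P_{3,3}$ to itself and maps $f$ to $f'\in S_{3,3}$, a pair separating $f$ yields one separating $f'$ by permuting entries. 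This reduces the problem to a handful of orbit representatives. For each representative I would design sparse triples, typically taking one of the matrices equal to the regular Jordan block $J_2$ (or to $J_1$) to fix a coordinate frame, and perturbing one or two entries of another matrix to move $f$ while preserving the many lower and equal-degree traces.

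The principal obstacle is the last step: there is no black-box reduction available, and each representative demands its own construction plus a verification that the $\approx37$ remaining generators of $P_{3,3}$ coincide on the constructed pair. Unlike the two-matrix case, orbits of $GL_3$ on $\mathcal{N}_3^3$ are intricate (the focus of Section~\ref{section_Canon}), and for the quintic orbit care is needed because $S_3$ does not preserve the particular list $\{\tr(Y_1^2Y_2Y_1Y_3),\tr(Y_2^2Y_1Y_2Y_3),\tr(Y_3^2Y_1Y_3Y_2)\}$ setwise; images under $S_3$ outside this list must be rewritten modulo lower-degree elements of $P_{3,3}$, or else the corresponding case handled directly. In practice one aims at sparse triples built from $J_2$, $E_{ij}$'s, and one or two scalar parameters, so the verification reduces to short $3\times 3$ matrix multiplications and the cyclic property of the trace.
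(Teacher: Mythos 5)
Your overall strategy coincides with the paper's: the fifteen two-index invariants are handled exactly as in the paper (zero matrix in the $k$-th slot plus the pair from Proposition~\ref{prop3x3two}, noting that all elements of $P_{3,3}$ involving $Y_k$ vanish on both triples and the remaining ones form a copy of $S_{3,2}$), and the eleven three-index invariants are to be handled by reducing to a few representatives via the $S_3$-action and exhibiting explicit pairs. Your observation that the quintic list $\{\tr(Y_1^2Y_2Y_1Y_3),\tr(Y_2^2Y_1Y_2Y_3),\tr(Y_3^2Y_1Y_3Y_2)\}$ is not $S_3$-stable is a legitimate subtlety that the paper dismisses with ``obviously''; it can be repaired by a multidegree argument (any $\sigma^{-1}h'$ that falls outside $P_{3,3}$ has multidegree not dominating that of the representative $f$, hence lies in the subalgebra generated by $P_{3,3}\setminus\{f\}$), so this is not where the problem lies.

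The genuine gap is that you never produce the separating pairs for the three-index representatives, and these constructions \emph{are} the mathematical content of the lemma. You state the plan (``sparse triples built from $J_2$, $E_{ij}$'s, and one or two scalar parameters'') and then explicitly concede that ``the principal obstacle is the last step,'' leaving it unresolved; a proof plan whose hardest step is acknowledged but not carried out is not a proof. For comparison, the paper completes precisely this step with three concrete pairs: for $f=\tr(Y_1Y_2Y_3)$ it takes $\un{A}=(E_{31},\,E_{12}+E_{32},\,E_{23})$ and $\un{B}=(0,\,E_{12},\,E_{21})$; for $f=\tr(Y_1^2Y_2Y_3)$ it takes $\un{A}=(E_{21}+E_{32},\,E_{12},\,E_{23})$ and $\un{B}=(E_{13}+E_{21},\,E_{12},\,J_2)$; for $f=\tr(Y_1^2Y_2Y_1Y_3)$ it takes $\un{A}=(E_{21}+E_{32},\,E_{13},\,E_{23})$ and $\un{B}=(E_{23}+E_{31},\,E_{12},\,E_{13})$, in each case verifying that all $38$ invariants of $P_{3,3}\setminus\{f\}$ agree on the pair while $f$ does not. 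Finding such pairs is not routine: each must simultaneously satisfy a large system of trace equalities while violating exactly one, and no general principle guarantees their existence a priori. Until you exhibit and verify such triples (or give some alternative argument), the lemma remains unproved for the eleven invariants involving all three matrices.
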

\begin{proof} If $f$ depends on entries of only two matrices from the list $Y_1,Y_2,Y_3$, then the claim of the lemma follows from the fact that $S_{3,2}$ is a minimal separating set for $\mathcal{O}(\N_3^2)^{GL_3}$ (see Proposition~\ref{prop3x3two}). 

We will construct $\un{A}=(A_1,A_2,A_3)$ and $\un{B}=(B_1,B_2,B_3)$ from $\N_3^3$ such that for any $h\in P_{3,3}\backslash \{f\}$ we have $h(\un{A})=h(\un{B})$, but $f(\un{A})\neq f(\un{B})$.

For $f=\tr(Y_1 Y_2 Y_3)$ we consider $\un{A}=(E_{31}, E_{12}+E_{32}, E_{23})$ and  $\un{B}=(0,E_{12}, E_{21})$.

For $f=\tr(Y_1^2 Y_2 Y_3)$ we consider $\un{A}=(E_{21}+E_{32}, E_{12}, E_{23})$ and $\un{B}= (E_{13}+E_{21}, E_{12}, J_2)$.

For $f=\tr(Y_1^2 Y_2 Y_1 Y_3)$ we consider $\un{A}=(E_{21}+E_{32},E_{13},E_{23})$ and $\un{B}=(E_{23}+E_{31},E_{12},E_{13})$. Obviously, the considered cases of $f\in S_{3,3}$ imply the rest of the cases.
\end{proof}

\begin{lemma}\label{lemmaInd} The following elements are indecomposable in  $\mathcal{O}(\N_3^3)^{GL_3}$:
\begin{enumerate}
\item[(a)]$\al_1 \tr(Y_1^2 Y_2^2 Y_3) + \al_2\tr(Y_2^2 Y_1^2 Y_3)$ for any $\al_1,\al_2\in\FF$, where $\al_1$ or $\al_2$ is non-zero;

\item[(b)]$\tr(Y_1^2 Y_2^2 Y_1 Y_3)$; 

\item[(c)]$\tr(Y_1^2 Y_2^2 Y_3^2)$. 
\end{enumerate}
\end{lemma}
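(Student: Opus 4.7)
The plan is to prove each of the three indecomposability assertions by exhibiting concrete nilpotent triples $\un{A}=(A_1,A_2,A_3)\in\N_3^3$ on which all possible decomposable summands of the relevant multidegree vanish while the given invariant does not. The multidegrees are $(2,2,1)$, $(3,2,1)$, and $(2,2,2)$ for parts (a), (b), (c) respectively. Preliminarily I would enumerate the decomposable summands: since $\tr(Y_i)=\tr(Y_i^2)=\tr(Y_i^3)=0$ in $\mathcal{O}(\N_3^d)^{GL_3}$, every decomposable summand is a product whose factors are traces of length $2$, $3$, or $4$, and the list of possible products in each multidegree is short and explicit. The upshot is that it suffices to make $\tr(Y_iY_j)$ vanish for all $i<j$ (and in (b) also $\tr(Y_1^2Y_2)=\tr(Y_1Y_2^2)=0$; and in (c) also $\tr(Y_i^2Y_j)=\tr(Y_iY_j^2)=0$ for $i\ne j$ together with $\tr(Y_1Y_2Y_3)=\tr(Y_1Y_3Y_2)=0$).

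For all three parts I would take $A_1=J_2=E_{12}+E_{23}$ and $A_2=E_{21}-E_{32}$; then $A_1^2=E_{13}$, $A_2^2=-E_{31}$, $A_1^2A_2^2=-E_{11}$, $A_2^2A_1^2=-E_{33}$, and $\tr(A_1A_2)=\tr(A_1^2A_2)=\tr(A_1A_2^2)=0$ automatically. For (b) the strictly lower triangular matrix $A_3=E_{21}-E_{32}+E_{31}$ satisfies the remaining trace vanishing conditions, and one computes $\tr(A_1^2A_2^2A_1A_3)=-(A_3)_{21}=-1$. For (c) the zero-diagonal choice $A_3=E_{12}+E_{21}+E_{23}-E_{32}$ (i.e.\ rows $(0,1,0)$, $(1,0,1)$, $(0,-1,0)$) is nilpotent, satisfies every listed trace condition, and gives $\tr(A_1^2A_2^2A_3^2)=-(A_3^2)_{11}=-1$.

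Part (a) is subtler because a single triple yields only the pair $\bigl(\tr(A_1^2A_2^2A_3),\tr(A_2^2A_1^2A_3)\bigr)=\bigl(-(A_3)_{11},-(A_3)_{33}\bigr)$, so I must exhibit \emph{two} choices of $A_3$ producing a nonsingular $2\times 2$ matrix of values. The rank-$1$ nilpotent $A_3^{(1)}=E_{11}-E_{13}+E_{31}-E_{33}$ gives the pair $(-1,1)$. The main obstacle is that a short case analysis shows every rank-$1$ nilpotent $A_3$ satisfying $\tr(A_1A_3)=\tr(A_2A_3)=0$ forces $(A_3)_{11}+(A_3)_{33}=0$, so no rank-$1$ choice escapes the antidiagonal. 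I therefore would search for a rank-$2$ (type $J_2$) nilpotent $A_3^{(2)}$ with diagonal $(1,-1,0)$; after imposing the linear trace conditions on the off-diagonal entries, the remaining equations $\tr(A_3^2)=0$ and $\det(A_3)=0$ reduce to a quadratic--cubic system in two variables that admits a solution over $\FF$ (crucially using that $\FF$ is algebraically closed, since the solution requires $\sqrt{-1}$). The resulting $A_3^{(2)}$ gives the pair $(-1,0)$, and combined with the first specialization produces the invertible matrix $\left(\begin{array}{cc}-1&1\\-1&0\end{array}\right)$, completing (a).
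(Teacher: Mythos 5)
Your proof is correct, and it rests on the same two pillars as the paper's: the $\NN^3$-grading together with the generating set $P_{3,3}$ reduces decomposability to membership in the span of a short explicit list of products (four in multidegree $(2,2,1)$, seven in $(3,2,1)$, thirteen in $(2,2,2)$ --- exactly the lists the paper writes out with coefficients $\al_i,\be_i,\ga$), and the verdict is reached by evaluating at explicit nilpotent triples. The execution, however, is genuinely different. The paper assumes a decomposition with undetermined coefficients and eliminates them through a chain of specializations, each chosen so that enough products survive to force a linear relation, ending in a contradiction; you instead choose triples lying in the common zero locus of \emph{all} candidate products, so a single evaluation (two evaluations in part (a), producing an invertible $2\times 2$ matrix of values) finishes the argument with no coefficient bookkeeping. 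Your sufficient vanishing conditions do cover the full product lists, and your matrices check out: with $A_1=J_2$, $A_2=E_{21}-E_{32}$ one has $A_1^2A_2^2=-E_{11}$, $A_2^2A_1^2=-E_{33}$, $\tr(A_1A_2)=\tr(A_1^2A_2)=\tr(A_1A_2^2)=0$, and your choices of $A_3$ give $\tr(A_1^2A_2^2A_1A_3)=-1$ in (b) and $\tr(A_1^2A_2^2A_3^2)=-1$ in (c) while every competing product vanishes. For (b) and (c) this is arguably cleaner than the paper's successive-elimination computations, at the price of having to find points in the common vanishing locus rather than merely generic test points.

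One remark on part (a): the rank-one obstruction and the rank-two matrix requiring $\sqrt{-1}$ are a self-inflicted detour. The four products in multidegree $(2,2,1)$ are $\tr(Y_1Y_2)\tr(Y_1Y_2Y_3)$, $\tr(Y_1Y_2)\tr(Y_1Y_3Y_2)$, $\tr(Y_1^2Y_2)\tr(Y_2Y_3)$ and $\tr(Y_2^2Y_1)\tr(Y_1Y_3)$, and with your $A_1,A_2$ each of them already vanishes through its \emph{first} factor, since $\tr(A_1A_2)=\tr(A_1^2A_2)=\tr(A_2^2A_1)=0$. Hence the constraints $\tr(A_1A_3)=\tr(A_2A_3)=0$ that you impose are unnecessary: \emph{any} nilpotent $A_3$ keeps all four products at zero and yields the value pair $\bigl(-(A_3)_{11},-(A_3)_{33}\bigr)$. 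Taking for instance the rank-one nilpotent $A_3=E_{11}+E_{12}-E_{21}-E_{22}$ gives $(-1,0)$, which together with your pair $(-1,1)$ already forms an invertible matrix --- over any field and with no case analysis. Your longer route is nevertheless sound: the matrix with rows $(1,0,1)$, $(\sqrt{-1},-1,0)$, $(-1,-\sqrt{-1},0)$ solves your quadratic--cubic system, is nilpotent, and gives the pair $(-1,0)$ as you claim.
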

\begin{proof} 
For short, denote
$$ C = \left(
\begin{array}{ccc}
0 & -1 & -1 \\
0 & 1  & 1  \\
1 & 0  & -1 \\
\end{array}
\right)\;\text{ and }\;
D = \left(
\begin{array}{ccc}
0 & 0 & 0 \\
0 & 1 & 1  \\
0 & -1 & -1 \\
\end{array}
\right).$$

\noindent{\bf (a)} Assume that the claim of part (a) does not hold. Since the algebra $\mathcal{O}(\N_3^3)^{GL_3}$ has $\NN^d$-grading by multidegrees and is known to be generated by $P_{3,3}$ (see Remark~\ref{remarkP33}), without loss of generality we obtain that there exist $\al_1\neq0,\al_2,\be_1,\ldots,\be_4$ from $\FF$ such that 
$$\begin{array}{rcl}
\al_1 \tr(A_1^2 A_2^2 A_3) + \al_2\tr(A_2^2 A_1^2 A_3)\!\!\!\! & = &\!\!\!\! 
\be_1 \tr(A_1 A_2 A_3)  \tr(A_1 A_2) + \be_2 \tr(A_1 A_3 A_2) \tr(A_1 A_2) \\
&+&\!\!\!\! \be_3 \tr(A_1^2 A_2) \tr(A_2 A_3) + \be_4 \tr(A_2^2 A_1) \tr(A_1 A_3) \\
\end{array}
$$
for all $A_1,A_2,A_3$ from $\N_3$. Consequently considering triples $(J_2,E_{23} + E_{31}, E_{13})$ and $(J_2, C, E_{21})$ we obtain $\be_3=0$ and $\al_1=0$, respectively; a contradiction.

\medskip
\noindent{\bf (b)}  Assume that the claim of part (b) does not hold. As in part~(a), we can see that there are $\al_1, \al_2, \al_3, \be_1, \be_2, \be_3, \ga\in\FF$ such that 
$$
\begin{array}{rcl}
\tr(A_1^2 A_2^2 A_1 A_3)\!\!\!\! & = &\!\!\!\!\al_1\! \tr(A_1^2 A_2^2)\! \tr(A_1 A_3) \!+\! \al_2\! \tr(A_1^2 A_2 A_3)\! \tr(A_1 A_2) \!+\! \al_3\! \tr(A_1^2 A_3 A_2)\! \tr(A_1 A_2) \\
& + &\!\!\!\!  \be_1\! \tr(A_1^2 A_2)\! \tr(A_1 A_2 A_3) \!+\! \be_2\! \tr(A_1^2 A_2)\! \tr(A_1 A_3 A_2) \!+\! \be_3\! \tr(A_2^2 A_1)\! \tr(A_1^2 A_3) \\ 
&+ &\!\!\!\!  \ga \tr(A_1 A_2)^2 \tr(A_1 A_3)
\end{array}
$$
for all triples $(A_1,A_2,A_3)\in\N_3^3$. Consequently considering triples $(J_2, E_{31} + E_{32}, E_{21} - E_{32})$, $(J_2, E_{31} + E_{32},  E_{32})$, $(J_2, C, E_{32})$, $(J_2, C, E_{31}+ E_{32})$, $(J_2,C,E_{21}-E_{32})$ we obtain that $\al_2=0$, $\ga=0$, $\be_2=\al_1+\be_1$, $\al_1=1$, $\be_1=0$, respectively. Then we take 
$$ A_1 = \left(
\begin{array}{ccc}
1 & 1 & 0 \\
-1 & -1  & 1  \\
0 & 0  & 0 \\
\end{array}
\right), \;
A_2 = \left(
\begin{array}{ccc}
0 & -1 & 1 \\
1 & 0  & 0  \\
1 & 0  & 0 \\
\end{array}
\right),
$$
and $A_3=E_{12}$ to obtain $0=-1$; a contradiction. 

\medskip
\noindent{\bf (c)} Assume that the claim of part (c) does not hold. As in part~(a), we can see that there are $\al_1, \ldots, \al_6, \be_1, \ldots, \be_6, \ga\in\FF$ such that 
$$ \begin{array}{rcl}
\tr(A_1^2 A_2^2 A_3^2)\!\!\!\! & = & \!\!\!\!\al_1\!\tr(A_1^2 A_2 A_3)\!\tr(A_2 A_3)\!+\!\al_2\!\tr(A_1^2 A_3 A_2)\!\tr(A_2 A_3)\!+\!\al_3\!\tr(A_2^2 A_1 
A_3)\!\tr(A_1 A_3)\\ 
&+&\!\!\!\!\al_4\!\tr(A_2^2 A_3 A_1)\!\tr(A_1 A_3)\!+\!\al_5\!\tr(A_3^2 A_1 
A_2)\!\tr(A_1 A_2)\!+\!\al_6\!\tr(A_3^2 A_2 A_1)\!\tr(A_1 A_2) \\
& + &\!\!\!\!\be_1\!\tr(A_1 A_2 A_3)^2  + \be_2\!\tr(A_1 A_2 A_3) \tr(A_1 A_3 A_2) + \be_3\!\tr(A_1 A_3 A_2)^2 \\
& + &\!\!\!\!\be_4\!\tr(A_1^2 A_2) \tr(A_3^2 A_2) + \be_5\!\tr(A_1^2 A_3) \tr(A_2^2 A_3) + \be_6\!\tr(A_2^2 A_1) \tr(A_3^2 A_1) \\
& + &\!\!\!\!\ga\!\tr(A_1 A_2) \tr(A_1 A_3) \tr(A_2 A_3)\\
\end{array}
$$
for all triples $(A_1,A_2,A_3)\in\N_3^3$.  Consequently considering triples $(J_2, E_{23} + E_{31}, E_{31})$, $(J_2, E_{31}, E_{23} + E_{31})$, and $(J_2, D, E_{31}+E_{32})$ we obtain that $\be_1=0$, $\be_3=0$, and $\be_2=-\ga$, respectively.  Considering all permutations of entries in the triple $(J_2,D+E_{31},E_{32})$ we can see that $\al_1=\cdots=\al_6=-\ga$. Similarly, taking all permutations of entries in the triple $(J_2,J_2,E_{23}+E_{31})$ we obtain that $\be_4=\be_5=\be_6=\ga$. Considering triple $(J_2, C, E_{21}+E_{32})$ we can see see that $\ga=0$. Finally, taking $A_2=J_2$, $A_3=E_{21}+E_{32}$, and 
$$A_1 = \left(
\begin{array}{ccc}
0 & 0 & -1 \\
0 & 0 & 1  \\
1 & 1 & 0 \\
\end{array}
\right)$$
we obtain $0=-1$; a contradiction.
\end{proof}

\begin{lemma}\label{lemmaBig}
Consider $\un{A}=(A_1,A_2,A_3)$ and $\un{B}=(B_1,B_2,B_3)$ from $\N_3^3$ that are not separated by $S_{3,3}$.  Then $\un{A}$, $\un{B}$ are not separated by $P_{3,3}$.
\end{lemma}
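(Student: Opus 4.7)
The plan is to reduce the statement to a case-by-case verification organized around the $GL_3$-orbits of pairs $(A_1,A_2)\in\N_3^2$. The first observation is that the subset of $S_{3,3}$ consisting of invariants depending on only two of the three generic matrices contains, for every pair of indices $(i,j)$, a copy of the separating set $S_{3,2}$ from Proposition~\ref{prop3x3two}. Hence if $\un{A}$ and $\un{B}$ are not separated by $S_{3,3}$, then the pairs $(A_i,A_j)$ and $(B_i,B_j)$ lie in the same $GL_3$-orbit for every $1\le i<j\le 3$. In particular there is a single $g\in GL_3$ with $g\cdot(A_1,A_2)=(B_1,B_2)$; after replacing $\un{B}$ by $g^{-1}\cdot\un{B}$, we may assume $A_1=B_1$ and $A_2=B_2$, and the task reduces to comparing the third coordinates $A_3$ and $B_3$.

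Next I would invoke the canonical-form analysis of Section~\ref{section_Canon} and conjugate once more, so that $(A_1,A_2)=(B_1,B_2)$ is placed in one of a finite list of representative shapes. Under this normalization the problem becomes: given $A_3,B_3\in\N_3$ such that every mixed $S_{3,3}$-invariant agrees on $(A_1,A_2,A_3)$ and $(A_1,A_2,B_3)$, show the same for the nine invariants of $P'_{3,3}$. The mixed invariants of $S_{3,3}$ that carry the load are $\tr(Y_1Y_2Y_3)$ and $\tr(Y_1Y_3Y_2)$ in degree three, the six invariants $\tr(Y_i^2 Y_j Y_k)$ in degree four, and the three invariants $\tr(Y_i^2 Y_j Y_i Y_k)$ in degree five. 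After specializing $A_1,A_2$ to the chosen canonical form, each of these becomes a linear or quadratic constraint on the entries of $B_3$ in terms of those of $A_3$. The strategy in each case is either to show that the surviving constraints pin down $B_3$ up to an element of the joint stabilizer $\mathrm{Stab}_{GL_3}(A_1)\cap\mathrm{Stab}_{GL_3}(A_2)$ (in which case all $P'_{3,3}$-invariants agree automatically), or, when that stabilizer is too large for such a pinning-down, to verify directly that each invariant in $P'_{3,3}$ is already a polynomial function of the values taken by $S_{3,3}$-invariants on the canonical form.

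Sections~\ref{sectionCaseA}--\ref{sectionCaseDE} carry out this case analysis, labelled A through E according to the normal forms established in Section~\ref{section_Canon}. The degenerate cases (one or both of $A_1,A_2$ zero, or conjugate to $J_1$) reduce essentially to Proposition~\ref{prop3x3two} together with the separating set for the pair involving $A_3$, so they are routine. The main obstacle is expected to be the intermediate cases in which $(A_1,A_2)$ is structured enough that many mixed invariants vanish identically (so the available constraints are fewer), yet the joint stabilizer still permits significant freedom in $B_3$; here one has to combine the surviving constraints with \emph{ad hoc} polynomial identities for nilpotent $3\times 3$ matrices (typically obtained by lifting the Cayley--Hamilton relation $Y^3=0$ and its polarizations). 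The computations in each subcase are elementary but lengthy, which motivates their distribution across several sections.
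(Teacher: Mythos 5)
Your opening reduction contains a genuine error that invalidates the whole plan. From the fact that $S_{3,3}$ contains a copy of the separating set $S_{3,2}$ for each pair of indices, you may conclude that $(A_i,A_j)$ and $(B_i,B_j)$ are not separated by the full invariant ring $\mathcal{O}(\N_3^2)^{GL_3}$; you may \emph{not} conclude that these pairs lie in the same $GL_3$-orbit. For a reductive group, invariants fail to separate two points exactly when their orbit \emph{closures} intersect, and orbits of tuples of nilpotent matrices are typically not closed. A concrete counterexample: $(J_2,0)$ and $(J_1,0)$ give equal values to every invariant of $\mathcal{O}(\N_3^2)^{GL_3}$ (every trace of a word vanishes on both), yet $J_2$ and $J_1$ are not conjugate, so no single $g$ with $g\cdot(J_2,0)=(J_1,0)$ exists. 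Hence your normalization ``$A_1=B_1$ and $A_2=B_2$'' cannot be arranged, and your argument never treats the mixed configurations in which $A_1$ and $B_1$ have different Jordan types, or in which $A_2$ and $B_2$ have different canonical types. These cases are neither vacuous nor routine: they are exactly the paper's cases (b), (d), (e), handled in Lemmas~\ref{lemmaB}, \ref{lemmaD}, \ref{lemmaE}, and mixed-type subcases occur even when $A_1=B_1=J_2$ --- in Lemma~\ref{lemmaA} the hypotheses allow $b_{24}=a_{28}$, $b_{28}=a_{24}$ with $(A_2,B_2)$ of type $(W_{\rm II},W_{\rm III})$; for instance $(J_2,E_{32})$ and $(J_2,E_{21})$ lie in different orbits but are not separated by any invariant.

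The correct reduction, used in the paper, is weaker: since invariants are constant on each orbit separately, one may act on $\un{A}$ and on $\un{B}$ by two \emph{independent} elements of $GL_3$, placing $A_1$ and $B_1$ separately into Jordan form ($0$, $J_1$ or $J_2$); after renaming this yields the five cases $(J_2,J_2)$, $(J_2,J_1)$, $(J_1,J_1)$, $(J_1,0)$, $(J_2,0)$. Then $A_2$ and $B_2$ are normalized only by the stabilizers of $A_1$ and $B_1$ respectively (Lemmas~\ref{lemmaCan1} and~\ref{lemmaCan2}), and the remaining entries of $A_2,A_3,B_2,B_3$ are compared equation by equation using the hypotheses $f(\un{A})=f(\un{B})$ for $f\in S_{3,3}$, supplemented by the nilpotency conditions $\tr=\si_2=\det=0$. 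Your later paragraphs describe a computational phase of roughly this kind, so parts of your plan could be repaired, but only after replacing the false orbit-equivalence principle by this two-sided normalization. You also omit the preliminary step that this normalization requires: when $A_i=B_i=0$ for some $i$, the conclusion is immediate because every element of $P'_{3,3}$ involves all three matrices and hence vanishes on both triples; this case must be split off before one may assume that, for each $i$, at least one of $A_i$, $B_i$ is nonzero.
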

\begin{proof}
By the definition of $P_{3,3}'$ we can assume that 
\begin{eq}\label{eqCond0}
A_i\neq 0 \text{ or }B_i\neq 0 \text{ for all } 1\leq i\leq 3.
\end{eq}%
Since $\FF$ is algebraically closed, then applying the action of $GL_3$ on the pair $(\un{A}, \un{B})$ we can assume that we have one of the following cases:
\begin{enumerate}
\item[(a)] $A_1= J_2$ and $B_1=J_2$;

\item[(b)] $A_1= J_2$ and $B_1=J_1$;

\item[(c)] $A_1= J_1$ and $B_1=J_1$;

\item[(d)] $A_1= J_1$ and $B_1=0$;

\item[(e)] $A_1= J_2$ and $B_1=0$.
\end{enumerate}
Here we also applied renaming of $\un{A}$ and $\un{B}$ to diminish the number of cases. Lemmas~\ref{lemmaA}, \ref{lemmaB}, \ref{lemmaC}, \ref{lemmaD}, \ref{lemmaE} (see below) conclude the proof.
\end{proof}

\section{Canonical forms}\label{section_Canon}

\begin{lemma}\label{lemmaCan1} Assume $A_1=J_1$ and $A_2\in \N_3$. Then there exists $g\in GL_3$ such that $g A_1 g^{-1} = A_1$ and  $g A_2 g^{-1}$ is one of the following matrices:
\begin{enumerate}
\item[(1)] $V_{\rm I}=\left(
\begin{array}{ccc}
0 & a_2 & a_3 \\
0 & 0 & a_6 \\
0 & 0 & 0 \\
\end{array}
\right); $

\item[(2)] $V_{\rm II}=\left(
\begin{array}{ccc}
0 & 0 & a_3 \\
0 & a_5 & -a_5^2 \\
0 & 1 & -a_5 \\
\end{array}
\right); $

\item[(3)] $V_{\rm III}=\left(
\begin{array}{ccc}
a_1 & a_2 & -a_1^2 \\
0 & -a_1 & a_6 \\
1 & 0 & 0 \\
\end{array}
\right); $

\item[(4)]  $V_{\rm IV}=\left(
\begin{array}{ccc}
a_1 & a_2 & a_3 \\
a_4 & 0 & 0 \\
a_7 & a_8 & -a_1 \\
\end{array}
\right) \mbox{ with }a_4\neq 0. $
\end{enumerate}
\end{lemma}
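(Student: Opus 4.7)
The plan is to compute the centralizer $Z := Z_{GL_3}(J_1) \subset GL_3$, identify the $Z$-invariant (and $Z$-semi-invariant) entries of $A_2$, and reduce to the four canonical forms by a case analysis driven by these invariants.

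A direct computation from the equation $g J_1 = J_1 g$ shows that $Z$ is the $5$-dimensional subgroup consisting of matrices
$$g = \begin{pmatrix} a & p & q \\ 0 & a & 0 \\ 0 & r & s \end{pmatrix}, \qquad a,s \in \FF^\times,\; p,q,r \in \FF.$$
Writing $A_2 = (a_{ij})$ and setting $a_1 = a_{11},\, a_4 = a_{21},\, a_7 = a_{31}$, one verifies that the first column of $gA_2g^{-1}$ equals $\left(a_1 + (pa_4 + qa_7)/a,\; a_4,\; (ra_4 + sa_7)/a\right)^T$. In particular, the $(2,1)$-entry $a_4$ is a $Z$-invariant, and when $a_4 = 0$ the $(3,1)$-entry transforms as $a_7 \mapsto (s/a)\,a_7$.

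The case split is driven by these two entries. When $a_4 \neq 0$ we aim at case~(4): the coefficients of $p$ and $q$ in $(gA_2g^{-1})_{22}$ and $(gA_2g^{-1})_{23}$ are (up to sign) $a_4/a \neq 0$, so we may choose $p$ and $q$ to make both entries vanish; the constraint $(3,3) = -a_1$ then follows from $\tr(A_2) = 0$, producing $V_{\rm IV}$. When $a_4 = 0$ but $a_7 \neq 0$, we aim at case~(3): first normalize $a_7 = 1$ by setting $s/a = 1/a_7$, then use the remaining parameters to kill the $(3,2)$- and $(3,3)$-entries; nilpotency (i.e.\ $\tr(A_2^k) = 0$ for $k=1,2,3$) then forces $(2,2) = -a_1$ and $(1,3) = -a_1^2$, yielding $V_{\rm III}$. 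When $a_4 = a_7 = 0$ the first column is $(a_1,0,0)^T$, so $e_1$ is an eigenvector of $A_2$ and nilpotency forces $a_1 = 0$. The first column of $A_2$ then vanishes, and $A_2$ descends to a $2\times 2$ nilpotent map $\ov{A_2}$ on $\FF^3/\FF e_1$; the action of $Z$ on this quotient factors through the lower-triangular Borel $B^- \subset GL_2$. Two subcases arise: if $a_{32} = 0$ then $\ov{A_2}$ is strictly upper triangular, and together with $a_{22} = a_{33} = 0$ (from nilpotency plus trace) we obtain $V_{\rm I}$; otherwise $B^-$ rescales the $(3,2)$-entry to $1$, nilpotency of $\ov{A_2}$ determines its remaining entries as in the bottom-right block of $V_{\rm II}$, and the remaining parameters $p,q$ are used to clear the $(1,2)$-entry, yielding $V_{\rm II}$.

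The hard part will be the bookkeeping: verifying that the linear combinations of $p,q,r$ appearing in the entries we wish to annihilate have nonzero leading coefficients under the hypothesis of each case (which reduces to $a_4 \neq 0$ for case~(4), $a_7 \neq 0$ for case~(3), and $a_{32} \neq 0$ for case~(2)), and checking that these normalizations can be performed in sequence without conflicting with one another. Apart from this, each reduction is an elementary linear-algebra computation together with the use of $\tr(A_2^k) = 0$ for $k=1,2,3$ to close off the remaining degrees of freedom.
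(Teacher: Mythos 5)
Your proposal is correct and takes essentially the same route as the paper: the paper likewise works inside the stabilizer of $J_1$ (it uses the subgroup of your $Z$ with $r=0$, which suffices), performs the identical case split on $a_4$, $a_7$, $a_8$, clears entries by solving for the group parameters, and then invokes nilpotency ($\tr$, $\tr$ of the square, $\det$) to pin down the remaining entries. The only cosmetic difference is your quotient-space framing of the case $a_4=a_7=0$, where the paper instead computes the conjugate matrix directly.
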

\begin{proof}
\medskip{}
\noindent{\bf(1)} Assume that $A_2$ is an upper triangular matrix. The condition $A_2\in \N_3$ concludes the proof of part (1).
\medskip{}

In the rest of the proof we consider the following $g\in GL_3$ with $gA_1g^{-1}=A_1$, where $g_1,g_9\neq 0$:  
$$g=\left(
\begin{array}{ccc}
g_1 & g_2 & g_3 \\
0 & g_1 & 0 \\
0 & 0 & g_9 \\
\end{array}
\right) \mbox{ and } 
A_2=\left(
\begin{array}{ccc}
a_1 & a_2 & a_3 \\
a_4 & a_5 & a_6 \\
a_7 & a_8 & a_9 \\
\end{array}
\right).
$$ 

\medskip{}
\noindent{\bf(2)} Assume that $a_4=a_7=0$ and $a_8\neq0$. Taking $g_1=1$, $g_3=\frac{1}{a_8}((a_1-a_5)g_2 - a_2)$ and $g_9=\frac{1}{a_8}$ we obtain 
$$gA_2g^{-1}=\left(
\begin{array}{ccc}
	\ast & 0 & \ast \\
	0 & \ast & \ast \\
	0 & 1 & \ast \\
\end{array}
\right). $$
Then the claim of part (2) follows from the condition $A_2\in \N_3$.
\medskip{}

\medskip{}
\noindent{\bf(3)} Assume that  $a_4=0$ and $a_7\neq 0$. Taking  $g_2=\frac{g_1a_8}{a_7}$, $g_3= \frac{g_1a_9}{a_7}$ and $g_9= \frac{g_1}{a_7}$ we obtain 
$$gA_2g^{-1}=\left(
\begin{array}{ccc}
	\ast & \ast & \ast \\
	0 & \ast & \ast \\
	1 & 0 & 0 \\
\end{array}
\right). $$
The condition $A_2\in \N_3$ concludes the proof of part (3).

\medskip{}
\noindent{\bf(4)} Assume that  $a_4\neq 0$. Taking  $g_2=\frac{g_1a_5}{a_4}$ and $g_3= \frac{a_6g_1}{a_4}$ we obtain  
$$gA_2g^{-1}=\left(
\begin{array}{ccc}
\ast & \ast & \ast \\
a_4 & 0 & 0 \\
\ast & \ast & \ast \\
\end{array}
\right). $$
The condition $A_2\in \N_3$ concludes the proof of part (4).
\end{proof}

\begin{lemma}\label{lemmaCan2} Assume $A_1=J_2$ and $A_2\in \N_3$. Then there exists $g\in GL_3$ such that $g A_1 g^{-1} = A_1$ and  $g A_2 g^{-1}$ is one of the following matrices:
\begin{enumerate}
\item[(1)] $W_{\rm I}=\left(
\begin{array}{ccc}
0 & a_2 & a_3 \\
0 & 0 & a_6 \\
0 & 0 & 0 \\
\end{array}
\right); $
\item[(2)] $W_{\rm II}=\left(
\begin{array}{ccc}
0 & 0 & a_3 \\
0 & 0 & 0 \\
0 & a_8 & 0\\
\end{array}
\right) \mbox{ with }a_8\neq 0; $ 
\item[(3)] $W_{\rm III}=\left(
\begin{array}{ccc}
0 & 0 & a_3 \\
a_4 & 0 & 0 \\
0 & 0 & 0\\
\end{array}
\right) \mbox{ with }a_4\neq 0; $
\item[(4)] $W_{\rm IV}=\left(
\begin{array}{ccc}
0 & 0 & 0 \\
a_4 & a_5 & a_6 \\
0 & a_8 & -a_5\\
\end{array}
\right) \mbox{ with }a_4,a_8\neq 0; $
\item[(5)] $W_{\rm V}=\left(
\begin{array}{ccc}
0 & a_2 & a_3 \\
0 & a_5 & a_6 \\
a_7 & a_8 & -a_5\\
\end{array}
\right) \mbox{ with }a_7\neq 0. $
\end{enumerate}
\end{lemma}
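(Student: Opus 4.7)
The plan is to identify the stabilizer of $J_2$ in $GL_3$, then carry out a five-case analysis keyed on the vanishing of the subdiagonal entries of $A_2$; in each case I would use the three parameters of the stabilizer together with the nilpotency relations $\tr(A_2)=\tr(A_2^2)=\det(A_2)=0$ to bring $A_2$ to the required form.

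The stabilizer computation is straightforward: since $J_2$ is the regular nilpotent matrix, its centralizer in $M_3$ is the commutative polynomial algebra $\FF[J_2]=\FF E\oplus\FF J_2\oplus\FF J_2^2$, so the stabilizer in $GL_3$ is the group of invertible upper-triangular Toeplitz matrices
$$H=\left\{g=\begin{pmatrix}a&b&c\\0&a&b\\0&0&a\end{pmatrix}:\,a\in\FF^{\times},\,b,c\in\FF\right\}.$$
Writing $A_2=(a_{ij})$ in the row-major notation of the statement, a direct calculation gives
$$(gA_2g^{-1})_{31}=a_7,\quad (gA_2g^{-1})_{21}=a_4+\tfrac{b}{a}a_7,\quad (gA_2g^{-1})_{32}=a_8-\tfrac{b}{a}a_7.$$
Thus $a_7=0$ is $H$-invariant, and when $a_7=0$ so are $a_4$ and $a_8$ individually; this justifies partitioning the argument into the five cases of the statement, indexed by which of $a_7,a_4,a_8$ vanish.

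For case (1), where $a_7=a_4=a_8=0$, the matrix $A_2$ is already upper triangular and nilpotency kills the diagonal, yielding $W_{\rm I}$ with no use of $H$. For case (2), where $a_7=a_4=0$ and $a_8\neq0$, the first column is $(a_1,0,0)^T$, so $a_1=0$ by nilpotency; I would then use $b/a$ to zero the $(2,2)$-entry (which simultaneously zeros the $(3,3)$-entry via $\tr=0$), $c/a$ to zero the $(1,2)$-entry, and observe that $\tr(A_2^2)=0$ forces the $(2,3)$-entry to vanish, producing $W_{\rm II}$. Case (3) is symmetric (the third row $(0,0,a_9)$ forces $a_9=0$, and the roles of the upper and lower blocks swap). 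For case (4), where $a_7=0$ and $a_4a_8\neq 0$, I would use $b/a$ to zero the $(1,1)$-entry and $c/a$ to zero the $(1,2)$-entry; then $\det(A_2)=0$ forces the $(1,3)$-entry to vanish (since $\det$ reduces to $a_3a_4a_8$) and $\tr=0$ yields the $(3,3)$-relation $a_9=-a_5$, producing $W_{\rm IV}$. For case (5), where $a_7\neq0$, I would use $b/a$ to zero the $(2,1)$-entry and then $c/a$ (which enters the $(1,1)$-entry through the term $(c/a)a_7$) to zero the $(1,1)$-entry; $\tr=0$ supplies the $(3,3)$-relation, yielding $W_{\rm V}$.

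The main technical point will be checking, in cases (2), (3) and (4), that the ``residual'' entries automatically vanish once both useful degrees of freedom of $H$ have already been spent. This is precisely where the higher nilpotency relations $\tr(A_2^2)=0$ (in cases (2) and (3)) and $\det(A_2)=0$ (in case (4)) are invoked; the work is routine, but requires careful bookkeeping of how the chosen ratios $b/a$ and $c/a$ interact with the remaining entries of $A_2$.
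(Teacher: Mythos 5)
Your proposal is correct and follows essentially the same route as the paper: the same upper-triangular Toeplitz stabilizer of $J_2$, the same five-case split keyed on the vanishing of $a_7$, $a_4$, $a_8$, the same choices of the parameters $b/a$ and $c/a$ to kill entries, and the same use of the nilpotency relations $\tr(A_2)=\tr(A_2^2)=\det(A_2)=0$ to eliminate the residual entries. Your explicit verification that $a_7$ (and, when $a_7=0$, also $a_4$ and $a_8$) is unchanged under the stabilizer action is a nice touch the paper leaves implicit, but the argument is otherwise the paper's own.
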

\begin{proof}
\noindent{\bf(1)} Assume that $A_2$ is an upper triangular matrix. Since $A_2\in \N_3$, the diagonal elements are zeros. Then taking $g=E$ we conclude the proof of part (1).
\medskip{}

In the rest of the proof we consider the following $g\in GL_3$ with $gA_1g^{-1}=A_1$, where $g_1\neq 0$:  
$$g=\left(
\begin{array}{ccc}
g_1 & g_2 & g_3 \\
0 & g_1 & g_2 \\
0 & 0 & g_1 \\
\end{array}
\right) \mbox{ and } 
A_2=\left(
\begin{array}{ccc}
a_1 & a_2 & a_3 \\
a_4 & a_5 & a_6 \\
a_7 & a_8 & a_9 \\
\end{array}
\right).
$$

\medskip{}
\noindent{\bf(2)} Assume that $a_4=a_7=0$ and $a_8\neq 0$. Taking $g_2= \frac{-g_1a_5}{a_8}$ and $g_3=-(a_2+\frac{a_1a_5-a_5^2}{a_8})\frac{g_1}{a_8}$ we obtain $$gA_2g^{-1}=\left(
\begin{array}{ccc}
\ast & 0 & \ast \\
0 & 0 & \ast \\
0 & a_8 & \ast \\
\end{array}
\right). $$
The condition $A_2\in \N_3$ concludes the proof of part (2).

\medskip{}
\noindent{\bf(3)} Assume that $a_7=a_8=0$ and $a_4\neq 0$. Taking $g_2= \frac{-g_1a_1}{a_4}$ and $g_3=\frac{g_1}{a_4^2}(a_1^2+a_4a_6+a_1(a_5-a_9))$ we obtain
$$gA_2g^{-1}=\left(
\begin{array}{ccc}
0 & \ast & \ast \\
a_4 & \ast & 0 \\
0 & 0 & \ast  \\
\end{array}
\right). $$
The condition $A_2\in \N_3$ concludes the proof of part (3).

\medskip{}
\noindent{\bf(4)} Assume that $a_7=0$ and $a_4,a_8\neq 0$. Taking $g_2= \frac{-g_1a_1}{a_4}$ and $g_3=(\frac{a_1a_5}{a_4}-a_2)\frac{g_1}{a_8}$ we obtain
$$gA_2g^{-1}=\left(
\begin{array}{ccc}
0 & 0 & \ast \\
a_4 & \ast & \ast \\
0 & a_8 & \ast \\
\end{array}
\right). $$ The condition $A_2\in \N_3$ concludes the proof of part (4).

\medskip{}
\noindent{\bf(5)} Assume that $a_7\neq 0$. Taking $g_2= \frac{-g_1a_4}{a_7}$ and $g_3=(\frac{a_4^2}{a_7}-a_1)\frac{g_1}{a_7}$ we obtain 
$$gA_2g^{-1}=\left(
\begin{array}{ccc}
0 & \ast & \ast \\
0 & \ast & \ast \\
a_7 & \ast & \ast \\
\end{array}
\right). $$ 
The condition $A_2\in \N_3$ concludes the proof of part (5).
\end{proof}

\section{Case (a): $A_1= J_2$ and $B_1=J_2$.}\label{sectionCaseA}


In Lemma~\ref{lemmaA}, \ref{lemmaB}, \ref{lemmaC}, \ref{lemmaD}, \ref{lemmaE} (see below) we will consider  $\un{A}=(A_1,A_2,A_3)$ and $\un{B}=(B_1,B_2,B_3)$ from $\N_3^3$ with the certain matrices $A_1$ and $B_1$. Denote the entries of matrices $A_i,B_i$, where  $i=2,3$:
$$A_i=\left(\begin{array}{ccc}
a_{i1} & a_{i2} & a_{i3} \\
a_{i4} & a_{i5} & a_{i6} \\
a_{i7} & a_{i8} & a_{i9} \\
\end{array}
\right) \;\text{ and }\;
B_i=\left(\begin{array}{ccc}
b_{i1} & b_{i2} & b_{i3} \\
b_{i4} & b_{i5} & b_{i6} \\
b_{i7} & b_{i8} & b_{i9} \\
\end{array}
\right). $$
For short, denote by $T_{i_1,\ldots,i_k}$ the equality $f(\un{A})=f(\un{B})$ for $f=\tr(Y_{i_1}\cdots Y_{i_k})$. We say that we can express the variable $c\in\{a_{ij}, b_{ij}\,|\,i=2,3,\, 1\leq j\leq 9\}$ from  $T_{i_1,\ldots,i_k}$ if  $T_{i_1,\ldots,i_k}$ can be rewritten as $c f = h$, where polynomials $f,h$ in commutative variables $\{a_{ij}, b_{ij}\}$ do not contain $c$ and $f\neq 0$. 

We say that a matrix $A\in\N_3$ has type $V_{\rm I},\ldots,V_{\rm IV}, W_{\rm I},\ldots, W_{\rm V}$, respectively, if $A$ is equal to the corresponding matrix from Lemma~\ref{lemmaCan1} or Lemma~\ref{lemmaCan2}. In the proofs of Lemmas~\ref{lemmaA}, \ref{lemmaB}, \ref{lemmaC} (see below) we say that the type of the pair $(A_2,B_2)$ is  $(K,R)$ for some symbols $K,R$ from the set of symbols $\{V_{\rm I},\ldots,V_{\rm IV}, W_{\rm I},\ldots, W_{\rm V}\}$ if 
\begin{enumerate}
\item[$\bullet$] the type of $A_2$ is $K$ and the elements of $A_2$ are the result of substitutions $a_i\to a_{2i}$, where $1\leq i\leq 9$, in the corresponding matrix from Lemma~\ref{lemmaCan1} or Lemma~\ref{lemmaCan2}. 

\item[$\bullet$] the type of $B_2$ is $L$ and the elements of $B_2$ are the result of substitutions $a_i\to b_{2i}$, where $1\leq i\leq 9$, in the corresponding matrix from Lemma~\ref{lemmaCan1} or Lemma~\ref{lemmaCan2}. 
\end{enumerate}

\begin{lemma}\label{lemmaA}
Consider $\un{A}=(J_2,A_2,A_3)$ and $\un{B}=(J_2,B_2,B_3)$ from $\N_3^3$ that are not separated by $S_{3,3}$.  Then $\un{A}$, $\un{B}$ are not separated by $P_{3,3}$.
\end{lemma}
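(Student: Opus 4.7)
The plan is to reduce to a finite case analysis via canonical forms, and then in each case verify that the six additional invariants in $P'_{3,3}$ take the same value on $\un{A}$ and $\un{B}$.

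First, I would apply Lemma~\ref{lemmaCan2} independently to $\un{A}$ and $\un{B}$, exploiting the fact that the stabilizer of $J_2$ in $GL_3$ acts on the remaining two slots of each triple. This allows me to assume that $A_2$ is one of the five canonical matrices $W_{\rm I},\ldots,W_{\rm V}$ and, independently, $B_2$ is one of them. Using the renaming symmetry $\un{A}\leftrightarrow\un{B}$ cuts the analysis down to $15$ unordered type-pairs. The matrices $A_3$ and $B_3$ remain general elements of $\N_3$, subject only to the nilpotency relations $\tr(A_3)=\tr(A_3^2)=\tr(A_3^3)=0$ and the analogous ones for $B_3$.

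In each type-pair, the strategy I would pursue is: (i) use the five equalities $T_{i_1,\ldots,i_k}$ coming from $\tr(Y_1 Y_2)$, $\tr(Y_1^2 Y_2)$, $\tr(Y_1 Y_2^2)$, $\tr(Y_1^2 Y_2^2)$, $\tr(Y_1^2 Y_2^2 Y_1 Y_2)$ (that is, the image of $S_{3,2}$ inside $S_{3,3}$) to constrain the entries of $B_2$ in terms of those of $A_2$ up to the residual stabilizer of the pair $(J_2, A_2)$; (ii) in favorable cases use that residual freedom to further normalize $B_2 = A_2$; (iii) use the remaining members of $S_{3,3}$ that involve $Y_3$ to express the entries of $B_3$ in terms of $A_3$; and (iv) substitute these relations into each of the traces $\tr(Y_i^2 Y_j^2 Y_k)$, $\tr(Y_i^2 Y_j^2 Y_i Y_k)$, and $\tr(Y_1^2 Y_2^2 Y_3^2)$ forming $P'_{3,3}$, and check the resulting polynomial identities.

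The hardest cases will be the generic type-pairs such as $(W_{\rm V},W_{\rm V})$, $(W_{\rm IV},W_{\rm V})$, or $(W_{\rm IV},W_{\rm IV})$: the unknowns in $B_2$ and $B_3$ are then only weakly constrained by $S_{3,3}$, so the resulting polynomial systems are large and nonlinear. In these cases I expect to rely on the residual stabilizer freedom together with careful use of the cubic nilpotency constraint to simplify before solving. By contrast, cases where at least one of $A_2,B_2$ is of type $W_{\rm I}$ (upper triangular, with only three free parameters) should be short, since most relevant traces then have very simple closed forms and the identities in $P'_{3,3}$ follow essentially at once. The proof itself will then naturally split into a sequence of sub-cases indexed by the type-pair, mirroring at a smaller scale the partition of Lemma~\ref{lemmaBig} into Sections~\ref{sectionCaseA}--\ref{sectionCaseDE}.
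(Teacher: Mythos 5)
Your proposal follows essentially the same route as the paper's proof: reduce $A_2$ and $B_2$ to the canonical forms of Lemma~\ref{lemmaCan2}, use the trace equalities $T_{i_1,\ldots,i_k}$ coming from $S_{3,3}$ to pin down the entries of $\un{B}$ in terms of those of $\un{A}$, and verify the invariants of $P'_{3,3}$ agree case by case. The only notable difference is bookkeeping: the paper first exploits $T_{112}$, $T_{12}$ and $T_{1122}$ (which force $b_{27}=a_{27}$ and tie $\{b_{24},b_{28}\}$ to $\{a_{24},a_{28}\}$) to eliminate almost all mixed type-pairs at the outset, so cases you flag as hard, such as $(W_{\rm IV},W_{\rm V})$, never actually arise --- only the diagonal pairs together with $(W_{\rm II},W_{\rm III})$ survive, and the genuinely delicate case turns out to be $(W_{\rm IV},W_{\rm IV})$, not $(W_{\rm V},W_{\rm V})$.
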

\begin{proof}
 By Lemma~\ref{lemmaCan2}, we can assume that $A_2$ and $B_2$ have one of the following types: $W_{\rm I},\ldots,W_{\rm V}$.  The equality  $T_{112}$ implies $b_{27}=a_{27}$. 

Assume $a_{27}\neq 0$. Then the pair $(A_2,B_2)$ has type $(W_{\rm V},W_{\rm V})$.  Consequently considering $T_{12}$ and $T_{113}$ we obtain $b_{28}=a_{28}$ and $b_{37}=a_{37}$, respectively. Since $a_{27}\neq 0$, consequently considering $T_{1122}$, $T_{112212}$, $T_{1132}$ and $T_{11213}$ we obtain $b_{25} = a_{25}$, $b_{22} = a_{22}$, $b_{39} = a_{39}$ and $b_{34} = a_{34}$, respectively. Continuing the calculations, let us consequently consider $T_{13}$,  $T_{221}$,  $T_{1123}$,  $\tr(A_3)-\tr(B_3)=0$, $\sigma_2(A_2)-\sigma_2(B_2)=0$, $T_{123}$,  $T_{132}$,  $T_{23}$ to conclude that $b_{38} = a_{38}$, $b_{26} = a_{26}$, $b_{31} = a_{31}$,  $b_{35} = a_{35}$, $b_{23} = a_{23}$, $b_{32} = a_{32}$, $b_{36} = a_{36}$ and $b_{33} = a_{33}$, respectively. Therefore, $\un{A}=\un{B}$ are not separated by $P_{3,3}$.

Assume $a_{27}=0$. Then the equality $T_{12}$ implies that  $b_{28}=a_{24}+a_{28}-b_{24}$. Considering $T_{1122}$  we conclude that $a_{24}-b_{24}=0$ or $a_{28}-b_{24}=0$. We divide the proof into two cases.

\medskip
\noindent{\bf 1.} Let $b_{24}\neq a_{24}$. Hence $b_{24}=a_{28}$ and $b_{28}=a_{24}$. These restrictions show that the only possibilities for the type of the pair $(A_2,B_2)$ are $(W_{\rm II},W_{\rm III})$, $(W_{\rm III},W_{\rm II})$ and $(W_{\rm IV},W_{\rm IV})$. Obviously, it is enough to consider the first and the last cases.

\medskip
\noindent{\bf 1.1.} Assume that the type of  $(A_2,B_2)$ is $(W_{\rm II},W_{\rm III})$. Since $a_{28}\neq0$, consequently considering $T_{113}$, $T_{1123}$, $T_{1132}$, $T_{13}$ and $T_{11213}$ we obtain $b_{37}=a_{37}$, $a_{34}=0$, $b_{38}=0$, $b_{34}=a_{38}$ and $a_{37}=0$, respectively. Thus $\un{A}$, $\un{B}$ are not separated by $P_{3,3}\backslash S_{3,3}$ and the required is proven. 

\medskip
\noindent{\bf 1.2.} Assume now that the type of $(A_2,B_2)$ is $(W_{\rm IV},W_{\rm IV})$. Then $a_{24}\neq 0$ and $a_{28}\neq0$. The equality $T_{113}$ implies that $b_{37}=a_{37}$ and $T_{112212}$ implies that $a_{24} a_{28} (a_{24}-a_{28})=0$; a contradiction.

\bigskip
\noindent{\bf 2.} Let $b_{24}=a_{24}$. Hence $b_{28}=a_{28}$. It is easy to see that the are only four possibilities for the type of the pair $(A_2,B_2)$, which are $(W_{\rm I},W_{\rm I})$, $(W_{\rm II},W_{\rm II})$, $(W_{\rm III},W_{\rm III})$  and $(W_{\rm IV},W_{\rm IV})$. 

\medskip
\noindent{\bf 2.1.} Assume that the type of $(A_2,B_2)$ is $(W_{\rm I},W_{\rm I})$. Consequently considering equalities $\tr(A_3)=0$, $\tr(B_3)=0$, $T_{13}$, $T_{113}$ we obtain $a_{39}=-a_{31}-a_{35}$, $b_{39}=-b_{31}-b_{35}$,  $b_{38}=a_{34}+a_{38}-b_{34}$, $b_{37}=a_{37}$, respectively. 

Let $a_{37}\neq 0$. Consequently considering equalities $T_{123}$, $T_{132}$, $T_{1133}$, $T_{23}$,  we obtain $b_{26}=a_{26}$, $b_{22}=a_{22}$, $a_{35}=((a_{34}-b_{34})(a_{38}-b_{34})+a_{37}b_{35})/a_{37}$, $b_{23}=((a_{22}-a_{26})(a_{34}-b_{34})+a_{23}a_{37})/a_{37}$, respectively.
From $T_{113313}$, $T_{331}$ we can express $b_{32}$ and $b_{36}$, respectively. Hence $\un{A}$, $\un{B}$ are not separated by $P_{3,3}$ and the required is proven. 

Let $a_{37}=0$ and $a_{34}-b_{34}\neq 0$. Then the equality $T_{1133}$ implies  $b_{34}=a_{38}$. If $a_{34}=0$, then we can see that $\un{A}$, $\un{B}$ are not separated by $P_{3,3}\backslash S_{3,3}$. On the other hand,  if $a_{34}\neq 0$, then $T_{113313}$ implies that $a_{38}=0$ and we also obtain that $\un{A}$, $\un{B}$ are not separated by $P_{3,3}\backslash S_{3,3}$. 

Finally, let $a_{37}=0$ and $a_{34}-b_{34}=0$. If $a_{34}=0$ or $a_{38}=0$, then  $\un{A}$, $\un{B}$ are not separated by $P_{3,3}\backslash S_{3,3}$. On the other hand, if $a_{34}$ and $a_{38}$ are not equal to zero, then considering $T_{3312}$ and $T_{3321}$ we obtain $b_{26}=a_{26}$ and $b_{22}=a_{22}$, respectively.  Thus $\un{A}$, $\un{B}$ are not separated by $P_{3,3}\backslash S_{3,3}$. 

\medskip
\noindent{\bf 2.2.} Assume that the type of $(A_2,B_2)$ is $(W_{\rm II},W_{\rm II})$. Since $a_{28}\neq0$, consequently considering equalities $T_{113}$, $T_{123}$, $T_{132}$, $\tr(A_3)-\tr(B_3)=0$, $T_{1123}$, $T_{13}$ we obtain that $b_{37}=a_{37}$, $b_{35}=a_{35}$, $b_{39}=a_{39}$, $b_{31}=a_{31}$, $b_{34}=a_{34}$, $b_{38}=a_{38}$, respectively. 

In case $a_{37}\neq0$ we apply consequently $T_{113313}$, $T_{331}$, $T_{23}$ to see that $b_{32}=a_{32}$, $b_{36}=a_{36}$, $b_{23}=a_{23}$, respectively. Thus $\un{A}$, $\un{B}$ are not separated by $P_{3,3}\backslash S_{3,3}$. 

Now we assume that $a_{37}=0$. It follows from $T_{23}$ that $b_{36}=a_{36}$. If $a_{34}=0$, then $\un{A}$, $\un{B}$ are not separated by $P_{3,3}$. If $a_{34}\neq0$, then consequently considering equalities $\si_2(A_3)-\si_2(B_3)=0$, $T_{223}$ we obtain that $b_{32}=a_{32}$, $b_{23}=a_{23}$, respectively.  Hence $\un{A}$, $\un{B}$ are not separated by $P_{3,3}\backslash S_{3,3}$. 

\medskip
\noindent{\bf 2.3.} Assume that the type of $(A_2,B_2)$ is $(W_{\rm III},W_{\rm III})$.  Since $a_{24}\neq0$, consequently considering equalities $T_{113}$, $T_{123}$, $T_{132}$, $\tr(A_3)-\tr(B_3)=0$, $T_{1132}$, $T_{13}$ we obtain that $b_{37}=a_{37}$, $b_{31}=a_{31}$, $b_{35}=a_{35}$, $b_{39}=a_{39}$, $b_{38}=a_{38}$, $b_{34}=a_{34}$, respectively. 

In case $a_{37}\neq0$ we apply consequently equalities $T_{113313}$, $T_{23}$, $T_{331}$, $\si_2(A_3)-\si_2(B_3)=0$ to see that $b_{32}=a_{32}$, $b_{23}=a_{23}$, $b_{36}=a_{36}$, $b_{33}=a_{33}$, respectively. Thus $\un{A}$, $\un{B}$ are not separated by $P_{3,3}$. 

Now we assume that $a_{37}=0$. It follows from $T_{23}$ that $b_{32}=a_{32}$. If $a_{38}=0$, then $\un{A}$, $\un{B}$ are not separated by $P_{3,3}$. If $a_{38}\neq0$, then consequently considering equalities $\si_2(A_3)-\si_2(B_3)=0$, $T_{223}$ we obtain that $b_{36}=a_{36}$, $b_{23}=a_{23}$, respectively.  Hence $\un{A}$, $\un{B}$ are not separated by $P_{3,3}\backslash S_{3,3}$. 

\medskip
\noindent{\bf 2.4.} Assume that the type of $(A_2,B_2)$ is $(W_{\rm IV},W_{\rm IV})$. Since $a_{24}\neq0$ and $a_{28}\neq0$, consequently considering the following equalities $T_{113}$, $T_{13}$, $\tr(A_3)=0$, $\tr(B_3)=0$, $T_{221}$, $\si_2(A_2)-\si_2(B_2)=0$, $T_{1123}$, $\si_2(A_2)=0$, $T_{2213}$, $T_{2231}$, $T_{123}$, $T_{223}$  we can see that $b_{37}=a_{37}$, $b_{38}=a_{34}+a_{38}-b_{34}$, $a_{39}=-a_{31}-a_{35}$, $b_{39}=-b_{31}-b_{35}$, $b_{25}=a_{25}$, $b_{26}=a_{26}$, $b_{34}=a_{34}$, $a_{26}=-a_{25}^2/a_{28}$, $b_{36}=a_{36} + a_{25}(a_{35}-b_{35})/a_{28}$, $b_{32}=a_{32} + a_{25}(a_{31}-b_{31})/a_{28}$, $b_{35}=a_{35} + a_{24}(a_{31}-b_{31})/a_{28}$, $b_{33}=a_{33} + a_{24}a_{25}^2(b_{31}-a_{31})/(a_{24}a_{28}^2)$,  respectively.

In case $b_{31}=a_{31}$ we have $\un{A}=\un{B}$ and the required is proven. Assume that $b_{31}\neq a_{31}$. It follows from $T_{1133}$ and $T_{331}$  that $a_{37}=0$ and $a_{38}=a_{34} - a_{24}a_{34}/a_{28}$, respectively.

If $a_{28}=a_{24}$, then $T_{132}$ implies $a_{24}(a_{31}-b_{31})=0$; a contradiction. Assume that $a_{28}\neq a_{24}$. Then $T_{22123}$ and $T_{2233}$ imply that $a_{25}=0$ and $a_{33}=0$, respectively. It follows from 
$$T_{332} - a_{32} T_{132} = a_{28}(a_{32}-a_{36})(a_{31}-b_{31})=0$$
that $a_{36}=a_{32}$. Consequently applying equalities $T_{3312}$ and $\si_2(A_3)-\si_2(B_3)=0$ we can see that $a_{35}=\frac{1}{2}(a_{31}+b_{31}+a_{24}(b_{31}-a_{31})/a_{28})$ and $a_{31}^2=b_{31}^2$. Hence $b_{31}=-a_{31}\neq0$ and the equality $\det(A_3)=0$ implies $a_{24}a_{31}(a_{24}-a_{28})=0$; a contradiction. 
\end{proof}

\section{Case (b): $A_1= J_2$ and $B_1=J_1$}\label{sectionCaseB}

\begin{lemma}\label{lemmaB}
Consider $\un{A}=(J_2,A_2,A_3)$ and $\un{B}=(J_1,B_2,B_3)$ from $\N_3^3$ that are not separated by $S_{3,3}$.  Then $\un{A}$, $\un{B}$ are not separated by $P_{3,3}$.
\end{lemma}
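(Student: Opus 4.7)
The overall strategy mirrors that of Lemma~\ref{lemmaA}. I will apply $GL_3$ separately to $\un{A}$ and to $\un{B}$ (which preserves every $GL_3$-invariant and so is compatible with the hypothesis that the tuples agree on $S_{3,3}$) in order to place $A_2$ in one of the five canonical forms $W_{\rm I},\ldots,W_{\rm V}$ of Lemma~\ref{lemmaCan2} and $B_2$ in one of the four canonical forms $V_{\rm I},\ldots,V_{\rm IV}$ of Lemma~\ref{lemmaCan1}. Then, exactly as in the proof of Lemma~\ref{lemmaA}, I walk through the remaining combinations of types, in each use the equalities $T_{i_1\cdots i_k}$ coming from $S_{3,3}$ to express the entries of $B_2,B_3$ in terms of those of $A_2,A_3$, and finally verify that every invariant in $P_{3,3}'=P_{3,3}\setminus S_{3,3}$ takes the same value on $\un{A}$ and $\un{B}$.

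The essential feature that distinguishes Case~(b) from Case~(a) is the Jordan-type asymmetry $A_1^2=J_2^2=E_{13}\neq0$ versus $B_1^2=J_1^2=0$. Consequently every invariant in $S_{3,3}$ of the form $\tr(Y_1^2 Y_{i_1}\cdots Y_{i_k})$ evaluates to $0$ on $\un{B}$, so the hypothesis forces the corresponding $\un{A}$-trace, namely the $(3,1)$-entry of $A_{i_1}\cdots A_{i_k}$, to vanish. In particular $T_{112}$ gives $a_{27}=0$, $T_{113}$ gives $a_{37}=0$, $T_{1122}$ then yields $a_{24}a_{28}=0$, $T_{1133}$ yields $a_{34}a_{38}=0$, while $T_{1123}$ and $T_{1132}$ force $(A_2A_3)_{31}=(A_3A_2)_{31}=0$, and the longer $T_{112212}$, $T_{113313}$, $T_{11213}$ impose analogous vanishing of further matrix entries. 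These constraints immediately eliminate the types $W_{\rm V}$ (which requires $a_{27}\neq0$) and $W_{\rm IV}$ (which requires $a_{24},a_{28}\neq0$) for $A_2$, leaving only $W_{\rm I},W_{\rm II},W_{\rm III}$.

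The trace equality $T_{12}\colon a_{24}+a_{28}=b_{24}$ combined with the above narrows the list of viable type pairs sharply: $A_2$ of type $W_{\rm I}$ forces $b_{24}=0$ and hence $B_2$ is of type $V_{\rm I}, V_{\rm II}$ or $V_{\rm III}$; whereas $A_2$ of type $W_{\rm II}$ or $W_{\rm III}$ forces $b_{24}\neq0$ and hence $B_2$ to be of type $V_{\rm IV}$. For each of the resulting at-most-five subcases I solve the remaining $S_{3,3}$-equalities for the free entries of $B_2,B_3$ in terms of the entries of $A_2,A_3$, precisely as in the corresponding subcases of Lemma~\ref{lemmaA}, and then check each element of $P_{3,3}'$. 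Most elements of $P_{3,3}'$ contain $Y_1^2$ as a cyclic factor, so they vanish on $\un{B}$ automatically; for those the verification reduces, via the already-derived identities, to showing vanishing of the corresponding $\un{A}$-trace. The remaining $P_{3,3}'$-invariants that avoid $Y_1^2$, such as $\tr(Y_2^2 Y_3^2 Y_1)$, $\tr(Y_3^2 Y_2^2 Y_1)$ and the items $\tr(Y_2^2 Y_3^2 Y_2 Y_1)$, $\tr(Y_3^2 Y_2^2 Y_3 Y_1)$, must be checked by direct entry-wise computation.

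The principal obstacle is the bookkeeping inside the subcases $(W_{\rm II},V_{\rm IV})$ and $(W_{\rm III},V_{\rm IV})$, which I expect to mimic subcase~2.4 of the proof of Lemma~\ref{lemmaA}: here some equalities become quadratic or cubic in the remaining free parameters and require further splitting according to whether auxiliary quantities such as $a_{24}-a_{28}$, $a_{37}$ or $a_{34}$ vanish, so that the worst branches yield an outright contradiction (rendering the subcase vacuous) while the others pin down $B_3$ uniquely. A secondary difficulty is the handful of $Y_1^2$-free invariants in $P_{3,3}'$ listed above, whose equality on $\un{A}$ and $\un{B}$ does not follow from the simple vanishing argument and requires genuine verification in each surviving subcase.
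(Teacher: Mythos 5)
Your setup coincides with the paper's: you reduce to the same five type pairs $(W_{\rm I},V_{\rm I})$, $(W_{\rm I},V_{\rm II})$, $(W_{\rm I},V_{\rm III})$, $(W_{\rm II},V_{\rm IV})$, $(W_{\rm III},V_{\rm IV})$ (the paper's Case~1 is $(W_{\rm II},V_{\rm IV})$ and its Cases~2.1--2.4 are the other four); your key observation that $B_1^2=J_1^2=0$ forces every $S_{3,3}$-trace containing $Y_1^2$ to kill the $(3,1)$-entry of the corresponding product of the $A_i$ (whence $a_{27}=a_{37}=0$, $a_{24}a_{28}=0$, $a_{34}a_{38}=0$, \dots) is exactly the paper's opening move; and your list of the four $Y_1^2$-free invariants of $P_{3,3}\setminus S_{3,3}$ that need genuine checking is correct.

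The gap is that the proof itself is missing. The entire content of this lemma is the case-by-case derivation: in each of the five pairs one must exhibit a specific sequence of $S_{3,3}$-equalities (together with the constraints $\tr=\sigma_2=\det=0$ coming from nilpotency), record exactly what they force on the entries, and then verify that these constraints make all thirteen invariants of $P_{3,3}\setminus S_{3,3}$ agree; you only assert that this will happen. Worse, your predicted termination pattern --- ``outright contradiction (vacuous branch) or $B_3$ pinned down uniquely'' --- is not what occurs in this lemma, so a computation run with that stopping rule would stall. In the paper's proof of this case no branch is vacuous and $\un{B}$ is almost never fully determined: for $(W_{\rm I},V_{\rm I})$ the equalities yield only $a_{34}a_{38}=0$ and $b_{26}b_{37}=0$, and for $(W_{\rm II},V_{\rm IV})$ with $b_{28}\neq0$ they yield $b_{23}=b_{33}=b_{36}=0$; the argument then closes by checking that such partial vanishing already equalizes every element of $P_{3,3}\setminus S_{3,3}$ on two triples that remain far from equal. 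Identifying, branch by branch (splitting on the vanishing of $b_{28}$, $b_{23}$, $a_{34}$, $b_{25}$, \dots), which partial constraints suffice is the substance of the proof, and it is absent from the proposal.
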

\begin{proof}
By Lemmas~\ref{lemmaCan2} and \ref{lemmaCan1} , we can assume that $A_2$ and $B_2$ have one of the following types: $W_{\rm I},\ldots,W_{\rm V}$ and $V_{\rm I},\ldots,V_{\rm IV}$, respectively. Equalities $T_{112}$ and $T_{113}$ imply that $a_{27}=a_{37}=0$. 
 
\medskip
\noindent{\bf 1.} Assume $a_{28}\neq 0$. Then equalities $T_{1122}$ and $T_{12}$ imply that $a_{24}=0$ and $b_{24}=a_{28}$, respectively. Therefore, we can see that the type of $(A_2,B_2)$ is $(W_{\rm II},V_{\rm IV})$. Consequently considering equalities $T_{1123}$, $T_{13}$, $T_{221}$, $T_{123}$, $\sigma_2(A_2)-\sigma_2(B_2)=0$, $\tr(A_3)=0$, $\tr(B_3)=0$, $T_{132}$ and $T_{23}$ we obtain that $a_{34}=0$, $b_{34}=a_{38}$, $b_{21}=0$, $b_{31}=a_{35}$, $b_{22}=-b_{23}b_{27}/a_{28}$, $a_{39}=-a_{31}-a_{35}$, $b_{39}=-a_{35}-b_{35}$, $b_{35}=-a_{31}-a_{35}-b_{27}b_{36}/a_{28}$ and $b_{32}=(a_{28}a_{36} - b_{27}b_{33} - b_{28}b_{36} - b_{23}b_{37})/a_{28} + a_{38}b_{23}b_{27}/a_{28}^2$, respectively.
 
In case $b_{28}\neq 0$ we apply consequently equalities $\det(B_2)=0$, $T_{223}$ and $T_{2213}$ to see that $b_{23}=b_{33}=b_{36}=0$. Thus $\un{A}$, $\un{B}$ are not separated by $P_{3,3}$.

Let $b_{28}=0$. Observe that if $b_{23}=0$, then $\un{A}$, $\un{B}$ are not separated by $P_{3,3}\backslash S_{3.3}$. So we can assume that $b_{23}\neq 0$. Considering $T_{2231}$ and $T_{223}$ we obtain that $b_{37}=a_{38}b_{27}/a_{28}$ and $b_{38}=(a_{35}b_{27}  + 2b_{27}b_{35})/a_{28} + b_{27}^2b_{36}/a_{28}^2$, respectively. Hence $\un{A}$, $\un{B}$ are not separated by $P_{3,3}$. 
 
\medskip
\noindent{\bf 2.} Now we assume that $a_{28}=0$. The equality $T_{12}$ implies $b_{24}=a_{24}$. These restrictions show that the only possibilities for the type of the pair $(A_2,B_2)$ are  $(W_{\rm I},V_{\rm I})$,  $(W_{\rm I},V_{\rm II})$,  $(W_{\rm I},V_{\rm III})$ and  $(W_{\rm III},V_{\rm IV})$.  
 
\medskip
\noindent{\bf 2.1.} Assume that the type of $(A_2,B_2)$ is $(W_{\rm I},V_{\rm I})$. Considering equalities $T_{13}$, $\tr(A_3)=0$ and $\tr(B_3)=0$ we obtain that  $b_{34}=a_{34}+a_{38}$, $a_{39}=-a_{31}-a_{35}$ and $b_{39}=-b_{31}-b_{35}$, respectively. In case $a_{34}=0$, the equality $T_{123}$ implies that $b_{26}b_{37}=0$. On the other hand, in case $a_{34}\neq 0$, applying  $T_{1133}$ and $T_{123}$  we obtain that $a_{38} = 0$ and $b_{26}b_{37}=0$, respectively. Hence $\un{A}$, $\un{B}$ are not separated by $P_{3,3}\backslash S_{3,3}$ in both cases.
 
\medskip
\noindent{\bf 2.2.} Assume that the type of $(A_2,B_2)$ is $(W_{\rm I},V_{\rm II})$. Consequently considering equalities $T_{13}$, $\tr(A_3)=0$ and $\tr(B_3)=0$ we obtain that  $b_{34}=a_{34}+a_{38}$, $a_{39}=-a_{31}-a_{35}$ and $b_{39}=-b_{31}-b_{35}$, respectively. 

Let $a_{34}\neq 0$. Then it follows from equalities $T_{1133}$ and $T_{331}$ that $a_{38}=0$ and $b_{35} = a_{31} + a_{35} - b_{31} -b_{36}b_{37}/a_{34}$, respectively.  In case  $b_{25}\neq 0$, the equality $T_{123}$ implies that $a_{34}=b_{25}b_{37}$. On the other hand, in case $b_{25}=0$, the equality $T_{223}$ implies that $b_{23}=0$. Thus $\un{A}$, $\un{B}$ are not separated by $P_{3,3}\backslash S_{3,3}$ in both cases.

Let $a_{34}=0$. In case $b_{25}\neq 0$, it follows from the equality $T_{123}$ that  $a_{38}=b_{25}b_{37}$. On the other hand, in case $b_{25}=0$, the equality $T_{223}$ implies that $a_{38}b_{23}=0$. We can see that $\un{A}$, $\un{B}$ are not separated by $P_{3,3}\backslash S_{3,3}$ in both cases. 
  
\medskip
\noindent{\bf 2.3.} Assume that the type of $(A_2,B_2)$ is $(W_{\rm I},V_{\rm III})$. Consequently considering equalities $T_{221}$, $\det(B_2)=0$, $T_{132}$, $T_{1133}$ and $T_{3321}$ we obtain that $b_{26}$, $b_{21}$, $b_{36}$, $a_{34}a_{38}$, $b_{33}b_{34}$ are equal to zero, respectively. Thus, $\un{A}$, $\un{B}$ are not separated by $P_{3,3}\backslash S_{3,3}$.
  
\medskip
\noindent{\bf 2.4.} Assume that the type of $(A_2,B_2)$ is $(W_{\rm III},V_{\rm IV})$. Since $a_{24}\neq 0$,  considering equalities $T_{221}$, $T_{1132}$, $T_{13}$, $T_{123}$, $\sigma_2(A_2)-\sigma_2(B_2)=0$, $\tr(A_3)=0$, and $\tr(B_3)=0$ we obtain that $b_{21}=0$,  $a_{38}=0$, $b_{34}=a_{34}$, $b_{31}=a_{31}$, $b_{22}=-b_{23}b_{27}/a_{24}$, $a_{39}=-a_{31}-a_{35}$, and $b_{39}=-a_{31}-b_{35}$, respectively. 

In case $b_{28}\neq 0$ it follows from equalities $\det(B_2)=0$, $T_{223}$ and $T_{2213}$ that $b_{23}=b_{33}=b_{36}=0$; therefore, $\un{A}$, $\un{B}$ are not separated by $P_{3,3}\backslash S_{3,3}$. Thus we can assume that $b_{28}=0$. In case $b_{23}=0$ we obviously have that $\un{A}$, $\un{B}$ are not separated by $P_{3,3}\backslash S_{3.3}$. On the other hand, in case $b_{23}\neq 0$ equalities $T_{2231}$, $T_{132}$, $T_{223}$ imply that $b_{37}=a_{34}b_{27}/a_{24}$, $b_{35} = a_{35} - b_{27}b_{36}/a_{24}$ and $b_{38}=(a_{31}b_{27} + 2a_{35}b_{27})/a_{24} - b_{27}^2b_{36}/a_{24}^2$, respectively. Hence $\un{A}$, $\un{B}$ are not separated by $P_{3,3}\backslash S_{3,3}$. 
\end{proof}

\section{Case (c): $A_1= J_1$ and $B_1=J_1$.}\label{sectionCaseC}

\begin{lemma}\label{lemmaC}
Consider $\un{A}=(J_1,A_2,A_3)$ and $\un{B}=(J_1,B_2,B_3)$ from $\N_3^3$ that are not separated by $S_{3,3}$.  Then $\un{A}$, $\un{B}$ are not separated by $P_{3,3}$.
\end{lemma}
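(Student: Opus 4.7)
The plan is to mimic the case-by-case analysis used in Lemmas~\ref{lemmaA} and \ref{lemmaB}. By Lemma~\ref{lemmaCan1}, after simultaneously conjugating the pair $(\un{A},\un{B})$ by an element of the stabiliser of $J_1$ in $GL_3$, we may assume that each of $A_2$ and $B_2$ has one of the four canonical types $V_{\rm I}, V_{\rm II}, V_{\rm III}, V_{\rm IV}$, with entries obtained from the matrices in Lemma~\ref{lemmaCan1} via the substitutions $a_i\mapsto a_{2i}$ and $a_i\mapsto b_{2i}$, respectively. Since $J_1^2=0$, the equalities $T_{112}$ and $T_{113}$ are trivial, and the only immediate low-degree constraint is $T_{12}$, which gives $a_{24}=b_{24}$. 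As the $(2,1)$-entry vanishes in types $V_{\rm I},V_{\rm II},V_{\rm III}$ and is nonzero in type $V_{\rm IV}$, this forces the pair $(A_2,B_2)$ either to lie in $\{V_{\rm I},V_{\rm II},V_{\rm III}\}^2$ or to be of type $(V_{\rm IV},V_{\rm IV})$ with the common nonzero $(2,1)$-entry.

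After exploiting the renaming symmetry $\un A \leftrightarrow \un B$ the problem reduces to seven subcases. In each subcase the strategy is to consecutively apply equalities $T_{i_1\cdots i_k}$ coming from $S_{3,3}$, together with the nilpotency relations $\tr(A_3)=\tr(B_3)=0$, $\si_2(A_3)=\si_2(B_3)=0$, and $\det(A_3)=\det(B_3)=0$, in order to express as many entries of $B_2$ and $B_3$ as possible in terms of the entries of $A_2$ and $A_3$. In most subcases this process terminates either with $\un A=\un B$, in which case separation is automatic, or with the remaining free parameters arising only in positions that do not affect the degree-$5$ and degree-$6$ invariants of $P'_{3,3}$; the latter is verified by direct evaluation of $\tr(Y_i^2 Y_j^2 Y_k)$, $\tr(Y_i^2 Y_j^2 Y_i Y_k)$, and $\tr(Y_1^2 Y_2^2 Y_3^2)$ on the remaining parametric family.

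The main obstacle, as in Lemmas~\ref{lemmaA} and \ref{lemmaB}, is the generic subcase $(V_{\rm IV},V_{\rm IV})$: there both $A_2$ and $B_2$ have several independent parameters beyond $a_{24}=b_{24}$, and relatively little is forced by the low-degree invariants alone. One then has to bring in the degree-$4$ and degree-$6$ equalities $T_{1122}$, $T_{112212}$, $T_{11213}$, $T_{113313}$ together with $T_{123}$, $T_{132}$, $T_{223}$, $T_{2213}$, $T_{2231}$ to pin down the remaining entries. One should also expect auxiliary branches in which a key intermediate relation (for instance $b_{31}=a_{31}$) fails, forcing a cascade of degenerations which either close up against the nilpotency conditions on $A_3$ and $B_3$ or produce an outright contradiction. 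The technical burden is purely bookkeeping and is structurally parallel to the treatment of Case~(a); no new conceptual ingredient beyond those already displayed in Lemmas~\ref{lemmaA} and \ref{lemmaB} is expected to be required.
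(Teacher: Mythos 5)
Your skeleton coincides with the paper's proof of this lemma: reduce via Lemma~\ref{lemmaCan1}, note that $T_{12}$ gives $b_{24}=a_{24}$ and hence (up to swapping $\un{A}$ and $\un{B}$) exactly the seven type pairs $(V_{\rm I},V_{\rm I})$, $(V_{\rm I},V_{\rm II})$, $(V_{\rm I},V_{\rm III})$, $(V_{\rm II},V_{\rm II})$, $(V_{\rm II},V_{\rm III})$, $(V_{\rm III},V_{\rm III})$, $(V_{\rm IV},V_{\rm IV})$, then eliminate entries consecutively until either $\un{A}=\un{B}$ or the invariants of $P'_{3,3}$ are seen to agree. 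However, your plan for the decisive subcase $(V_{\rm IV},V_{\rm IV})$ rests on a step that fails. You propose to pin down the remaining entries with $T_{1122}$, $T_{112212}$, $T_{11213}$ and $T_{113313}$, but each of these traces contains the factor $Y_1^2$, and here $A_1=B_1=J_1$ with $J_1^2=0$; all four equalities therefore read $0=0$ and impose no constraints at all. This contradicts your own (correct) opening observation that $T_{112}$ and $T_{113}$ are trivial in this case: in case (c), unlike case (a), the \emph{entire} family of invariants involving $Y_1^2$ is dead, which is precisely what makes this case not just ``structurally parallel'' bookkeeping.

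What actually closes the $(V_{\rm IV},V_{\rm IV})$ case (and several of the other six) in the paper is a resource your outline omits: the nilpotency of the \emph{second} matrices, i.e.\ $\sigma_2(A_2)=\sigma_2(B_2)=0$ and $\det(A_2)=\det(B_2)=0$ --- you list only $\tr$, $\sigma_2$, $\det$ of $A_3$ and $B_3$ --- and for types $V_{\rm III}$, $V_{\rm IV}$ these are genuine extra constraints on the parameters, not consequences of the canonical form. Combined with traces avoiding $Y_1^2$, namely $T_{221}$, $T_{123}$, $T_{132}$, $T_{23}$, $T_{223}$, $T_{2213}$, $T_{2231}$, $T_{22123}$, $T_{331}$, $T_{3312}$, $T_{3321}$, and with identities expressing differences of $P'_{3,3}$-invariants through vanishing ones (e.g.\ the difference of $\tr(Y_3^2Y_2^2Y_3Y_1)$ written as an explicit multiple of $T_{331}$), these give $b_{21}=a_{21}$, $b_{31}=a_{31}$, formulas for $a_{22}$, $b_{22}$, $b_{35}$, $b_{32}$, and then a branching on the vanishing of $a_{23}$, $b_{23}$, $a_{21}$, $a_{28}$, $b_{28}$ --- not on $b_{31}=a_{31}$, which here holds automatically from $T_{123}$ because $a_{24}\neq0$. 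So the case division and elimination philosophy are right, but as written your argument stalls exactly where the real work lies, and the repair requires replacing the vacuous $Y_1^2$-equalities by the $A_2$, $B_2$ nilpotency relations and a different list of traces.
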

\begin{proof}
By Lemma~\ref{lemmaCan1}, we can assume that $A_2$ and $B_2$ have one of the following types: $V_{\rm I},\ldots,V_{\rm IV}$. The equality $T_{12}$ implies that $b_{24}=a_{24}$, therefore we can see that the only possibilities for the type of $(A_2, B_2)$ are $(V_{\rm I},V_{\rm I})$, $(V_{\rm I},V_{\rm II})$, $(V_{\rm I},V_{\rm III})$, $(V_{\rm II},V_{\rm II})$, $(V_{\rm II},V_{\rm III})$, $(V_{\rm III},V_{\rm III})$ and $(V_{\rm IV},V_{\rm IV})$. Furthermore, consequently considering the equalities $T_{13}$, $\tr(A_3)=0$, and $\tr(B_3)=0$ we obtain that $b_{34}=a_{34}$, $a_{39}=-a_{31}-a_{35}$, and $b_{39}=-b_{31}-b_{35}$.

\medskip
\noindent{\bf 1.} Assume that the type of $(A_2,B_2)$ is $(V_{\rm I},V_{\rm I})$. Note that if $b_{26}=0$, then equality $T_{123}$ implies that $a_{26}=0$ or $a_{37}=0$ and in both cases we have that  $\un{A}$, $\un{B}$ are not separated by $P_{3,3}\backslash S_{3,3}$. On the other hand, in case $b_{26}\neq 0$ considering the equalities $T_{23}$ and $T_{123}$ we obtain $b_{38}=(a_{22}a_{34}+a_{23}a_{37}+a_{26}a_{38}-a_{34}b_{22}-b_{23}b_{37})/b_{26}$ and $b_{37}=a_{26}a_{37}/b_{26}$, respectively. If  $a_{26}=0$ or $a_{37}=0$, then $\un{A}$, $\un{B}$ are not separated by $P_{3,3}\backslash S_{3,3}$. Thus we can assume that  $a_{26},a_{37}\neq 0$. Considering the equalities $T_{223}$ and $T_{331}$, we obtain $b_{22}=a_{22}$ and $a_{36}=a_{34}(-a_{31}-a_{35}+b_{31}+b_{35}) /a_{37} + a_{26} a_{36} /b_{26}$. Therefore,  $\un{A}$, $\un{B}$ are not separated by $P_{3,3}\backslash S_{3,3}$.

\medskip
\noindent{\bf 2.} Assume that the type of $(A_2,B_2)$ is $(V_{\rm I},V_{\rm II})$. At first, assume that $a_{37}\neq 0$. Then considering the equalities $T_{331}$ and $T_{123}$ we obtain that $a_{36}=(a_{34}(-a_{31}-a_{35}+b_{31}+b_{35}) + b_{36}b_{37})/a_{37}$, $a_{26}=b_{25}(a_{34}-b_{25}b_{37})/a_{37}$, respectively. Thus the equality $T_{223}$ implies that $b_{23}-a_{22}b_{25}=0$ or $a_{34}-b_{25}b_{37}=0$, and therefore  $\un{A}$, $\un{B}$ are not separated by $P_{3,3}\backslash S_{3,3}$.

On the other hand, assume that $a_{37}=0$. If $b_{23}=0$, then $\un{A}$, $\un{B}$ are not separated by $P_{3,3}\backslash S_{3,3}$. Furthermore, if $b_{23}\neq 0$, then the equality $T_{223}$ implies $a_{34}=b_{25}b_{37}$ and we also have that $\un{A}$, $\un{B}$ are not separated by $P_{3,3}\backslash S_{3,3}$.

\medskip
\noindent{\bf 3.} Assume that the type of $(A_2,B_2)$ is $(V_{\rm I},V_{\rm III})$. Considering the equalities $T_{221}$, $\det(B_2)=0$, and $T_{132}$, we obtain $b_{26}=b_{21}=b_{36}=0$. It follows from  $T_{123}$ and $T_{3321}$ that $a_{26}a_{37}=0$ and $a_{34}b_{33}=0$, respectively. We can conclude that $\un{A}$, $\un{B}$ are not separated by $P_{3,3}\backslash S_{3,3}$.

\medskip
\noindent{\bf 4.} Assume that the type of $(A_2,B_2)$ is $(V_{\rm II},V_{\rm II})$. It follows from the equality $T_{23}$ that \linebreak
$b_{36} = a_{36} + a_{25}(a_{31}+2a_{35}) -a_{25}^2a_{38} +a_{23}a_{37} - b_{25}(b_{31}+2b_{35}) \ -b_{23}b_{37}+b_{25}^2b_{38}$. 

Assume that $a_{25}\neq 0$. Then the equality $T_{123}$ implies that $a_{37}=(a_{25}a_{34}+b_{25}(-a_{34}+b_{25}b_{37}))/a_{25}^2$, and $T_{223}$ implies that $a_{25}b_{23}-a_{23}b_{25}=0$ or $-a_{34}+b_{25}b_{37}=0$. Furthermore, if $-a_{34}+b_{25}b_{37}=0$, then  $\un{A}$, $\un{B}$ are not separated by $P_{3,3}\backslash S_{3,3}$. On the other hand, if $-a_{34}+b_{25}b_{37}\neq0$, then $b_{23}=a_{23}b_{25}/a_{25}$ and we can express $a_{38}$ from the equality $T_{3312}$. Since  $T_{332231}=(a_{23}b_{25}(a_{34}-b_{25}b_{37})/a_{25})T_{331}$, we can conclude that $\un{A}$, $\un{B}$ are not separated by $P_{3,3}\backslash S_{3,3}$.  

Now we assume that $a_{25}=0$. If $a_{34}=b_{25}=0$, then  $\un{A}$, $\un{B}$ are not separated by $P_{3,3}\backslash S_{3,3}$. If $a_{34}\neq0$ and $b_{25}=0$, then the equalities $T_{223}$ and $T_{331}$ imply that $b_{23}=a_{23}$ and $a_{31}=(a_{34}(-a_{35}+b_{31}+b_{35})+(a_{37}-b_{37})(-a_{36}+a_{23}b_{37}))/a_{34}$; therefore, we also have that $\un{A}$, $\un{B}$ are not separated by $P_{3,3}\backslash S_{3,3}$. Finally, if $b_{25}\neq 0$, then $T_{123}$ implies $a_{34}=b_{25} b_{37}$ and  $T_{223}$ implies that $a_{23}=0$ or $b_{37}=0$; thus, $\un{A}$, $\un{B}$ are not separated by $P_{3,3}\backslash S_{3,3}$.

\medskip
\noindent{\bf 5.}  Assume that the type of $(A_2,B_2)$ is $(V_{\rm II},V_{\rm III})$. Considering the equalities $T_{221}$, $\det(B_2)=0$, and $T_{132}$ we obtain $b_{26}=b_{21}=b_{36}=0$. Hence,  $T_{332231} = (a_{31}a_{34}+a_{34}a_{35}+a_{36}a_{37}) T_{223} + a_{34}b_{22} T_{3321}$. Thus, the equalities $T_{223}$ and $T_{3321}$ imply that $\un{A}$, $\un{B}$ are not separated by $P_{3,3}\backslash S_{3,3}$.

\medskip
\noindent{\bf 6.} Assume that the type of $(A_2,B_2)$ is $(V_{\rm III},V_{\rm III})$. It follows from equalities $T_{221}$ and $T_{132}$ that $b_{26}=a_{26}$ and $b_{36}=a_{21}a_{34}+a_{36}-a_{34}b_{21}$, respectively. 

Assume $a_{26}=0$. Then consequently considering equalities $\det(A_2)=0$, $\det(B_2)=0$, and $T_{23}$ we obtain that $a_{21}=b_{21}=0$ and $b_{33}=a_{33}+a_{22}a_{34}-a_{34}b_{22}$. Hence, if $a_{34}=0$, then $\un{A}$, $\un{B}$ are not separated by $P_{3,3}\backslash S_{3,3}$. On the other hand, if $a_{34}\neq 0$, then the equalities $T_{331}$ and $T_{3321}$ imply that $b_{35}=(a_{31}a_{34}+a_{34}(a_{35}-b_{31})+a_{36}(a_{37}-b_{37}))/a_{34}$ and $a_{22}=(-a_{31}a_{36}+a_{34}^2b_{22}+a_{36}b_{31})/a_{34}^2$, respectively. Finally, $T_{223}$ implies that $a_{36}=0$ or $a_{31}-b_{31}=0$. In both cases we have  that $\un{A}$, $\un{B}$ are not separated by $P_{3,3}\backslash S_{3,3}$.

Now we assume that $a_{26}\neq 0$. Consequently applying equalities $\det(A_2)=0$, $\det(B_2)=0$, $T_{123}$, $T_{2213}$, $T_{2231}$, and $T_{23}$ we obtain that 
$a_{22}=a_{21}^3/a_{26}$, $b_{22}=b_{21}^3/a_{26}$,\linebreak $b_{37}=(-a_{21}a_{34}+a_{34}b_{21}+a_{26}a_{37})/a_{26}$, $b_{35}=((a_{21}-b_{21})(a_{36}-a_{34}b_{21})+a_{26}a_{35})/a_{26}$, $b_{31}=(a_{21}a_{34}(a_{21}-b_{21})+a_{26}(a_{31}-a_{21}a_{37}+a_{37}b_{21}))/a_{26}$ and $b_{38}=(a_{21}^3a_{34}+a_{21}b_{21}(a_{36}-a_{34}b_{21})+b_{21}^2(-a_{36}+a_{34}b_{21})-a_{21}^2(a_{26}a_{37}+a_{34}b_{21})+a_{21}a_{26}(a_{31}-a_{35}+a_{37}b_{21})+a_{26}(a_{33}-a_{31}b_{21}+a_{35}b_{21}-b_{33})+a_{26}^2a_{38})/a_{26}^2$, respectively. If $a_{34}\neq 0$, then considering $T_{3312}$ and $T_{223}$ we can express $b_{33}$ and $b_{32}$, respectively; therefore, $\un{A}$, $\un{B}$ are not separated by $P_{3,3}$. Thus, we can assume that $a_{34}=0$. Considering equalities $T_{22123}$ and $T_{223}$ we can express $b_{33}$ and $b_{32}$, respectively;  therefore, $\un{A}$, $\un{B}$ are not separated by $P_{3,3}$.

\medskip
\noindent{\bf 7.}  Assume that the type of $(A_2,B_2)$ is $(V_{\rm IV},V_{\rm IV})$. Since $a_{24}\neq 0$, considering the equalities $T_{221}$, $T_{123}$, $\sigma_2(A_2)=0$, $\sigma_2(B_2)=0$, $T_{132}$, and $T_{23}$ we obtain that  $b_{21}=a_{21}$, $b_{31}=a_{31}$,\linebreak $a_{22}=(-a_{21}^2-a_{23}a_{27})/a_{24}$, $b_{22}=(-a_{21}^2-b_{23}b_{27})/a_{24}$, $b_{35}=(a_{27}a_{36}+a_{24}a_{35}-b_{27}b_{36})/a_{24}$ and 
$b_{32}=(-a_{23}a_{27}a_{34}-a_{21}a_{27}a_{36}+a_{34}b_{23}b_{27}+a_{24}^2a_{32}+a_{21}b_{27}b_{36}+a_{24}(a_{27}a_{33}+a_{28}a_{36}+a_{23}a_{37}-b_{27}b_{33}-b_{28}b_{36}-b_{23}b_{37}))/a_{24}^2$, respectively.

\medskip
\noindent{\bf 7.1.} Assume $a_{23}\neq 0$. Considering equalities $\det(A_2)=0$ and $T_{2231}$ we have that \linebreak $a_{28}=(a_{21}^3+a_{21}a_{23}a_{27})/(a_{23}a_{24})$ and $a_{37}=(a_{23}a_{27}a_{34}-a_{34}b_{23}b_{27}+a_{24}b_{23}b_{37})/(a_{23}a_{24})$. 

Let $b_{23}=0$. Then the equality $\det(B_2)=0$ implies that $a_{21}=0$ and we can express $a_{38}$ from the equality $T_{223}$. If $b_{28}=0$, then $\un{A}$, $\un{B}$ are not separated by $P_{3,3}\backslash S_{3,3}$. On the other hand, if $b_{28}\neq 0$, then the equalities $T_{2213}$ and $T_{22123}$ imply that $b_{36}=b_{33}=0$, and therefore $\un{A}$, $\un{B}$ are not separated by $P_{3,3}$.

Let $b_{23}\neq 0$. Then it follows from the equality $\det(B_2)=0$ that $b_{28}=(a_{21}^3+a_{21}b_{23}b_{27})/(a_{24}b_{23})$. In case  $a_{21}=0$ we can express $b_{38}$ from the equality
$T_{223}$ and we can see that  $T_{332231}=(-a_{24}a_{27}(a_{31}+2a_{35})-a_{27}^2a_{36}+a_{24}^2a_{38})(a_{23}/a_{24})T_{331}$; thus, $\un{A}$, $\un{B}$ are not separated by $P_{3,3}\backslash S_{3,3}$. On the other hand, if $a_{21}\neq 0$, then equalities $T_{2213}$, $T_{22123}$ imply that $b_{36}=a_{36}b_{23}/a_{23}$, $b_{33}=a_{33}b_{23}/a_{23}$ and we can express  $b_{38}$ from $T_{223}$. Therefore,  $\un{A}$, $\un{B}$ are not separated by $P_{3,3}$.

\medskip
\noindent{\bf 7.2.} Assume that $a_{23}=0$. Then the equality $\det(A_2)=0$ implies that  $a_{21}=0$. 

Let $b_{23}\neq 0$. Since $a_{24}\neq 0$, considering equalities $\det(B_2)=0$, $T_{2231}$ we obtain $b_{28}=0$, $b_{37}=a_{34}b_{27}/a_{24}$ and then we can express $b_{38}$ from $T_{223}$. If $a_{28}=0$, then $\un{A}$, $\un{B}$ are not separated by $P_{3,3}\backslash S_{3,3}$. Thus we can assume that  $a_{28}\neq 0$. It follows from $T_{2213}$ and $T_{22123}$ that $a_{36}=a_{33}=0$,  and therefore $\un{A}$, $\un{B}$ are not separated by $P_{3,3}$.

On the other hand, let $b_{23}=0$. If $b_{28}\neq 0$, then considering the equalities $T_{223}$ and $T_{2213}$ we obtain $b_{33}=a_{28}a_{33}/b_{28}$ and $b_{36}=a_{28}a_{36}/b_{28}$, respectively; thus, $\un{A}$, $\un{B}$ are not separated by $P_{3,3}\backslash S_{3,3}$. Thus we can assume that $b_{28}=0$. It is easy to
see that if $a_{28}=0$, then $\un{A}$, $\un{B}$ are not separated by $P_{3,3}\backslash S_{3,3}$. If $a_{28}\neq 0$, then the equalities $T_{223}$ and $T_{2213}$ imply that $a_{33}=a_{36}=0$, and therefore $\un{A}$, $\un{B}$ are not separated by $P_{3,3}\backslash S_{3,3}$.
\end{proof}

\section{Cases (d) and (e): $B_1=0$. }\label{sectionCaseDE}

\begin{lemma}\label{lemmaD}
Consider $\un{A}=(J_1,A_2,A_3)$ and $\un{B}=(0,B_2,B_3)$ from $\N_3^3$ that are not separated by $S_{3,3}$.  Then $\un{A}$, $\un{B}$ are not separated by $P_{3,3}$.
\end{lemma}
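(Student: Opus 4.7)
The plan is to mirror the organisation of Lemmas~\ref{lemmaA}--\ref{lemmaC}. Because $B_1=0$ is fixed by every element of $GL_3$, I may act independently on $\un A$ and on $\un B$: applying an element of the stabiliser of $J_1$ to $\un A$ brings $A_2$ into one of the four canonical forms $V_{\rm I},\ldots,V_{\rm IV}$ of Lemma~\ref{lemmaCan1} (while $A_1=J_1$ is preserved), and an unrelated conjugation on $\un B$ puts $B_2$ into Jordan normal form, so that $B_2\in\{0,J_1,J_2\}$. This gives at most twelve type-pairs $(A_2,B_2)$, with the remaining entries of $A_3$ and $B_3$ still free and parametrised as in Section~\ref{sectionCaseA}.

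The second step exploits the asymmetry produced by $B_1=0$. Every invariant $f\in S_{3,3}$ containing a factor of $Y_1$ satisfies $f(\un B)=0$, and therefore $f(\un A)=0$ by hypothesis. The degree-two equalities $T_{12}$ and $T_{13}$ immediately yield $a_{24}=a_{34}=0$; the remaining $Y_1$-invariants in $S_{3,3}$ (namely $\tr(Y_1Y_j^2)$, $\tr(Y_1Y_jY_k)$, the $\tr(Y_i^2Y_jY_k)$ with $i\neq 1$ that survive $A_1^2=0$, and $\tr(Y_j^2Y_1Y_jY_k)$ with $j\neq 1$) translate, via the identity $\tr(E_{12}M)=M_{21}$, into vanishing statements for specific $(2,1)$-entries of matrix products in $A_2$ and $A_3$. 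Meanwhile the $Y_1$-free invariants in $S_{3,3}$ comprise exactly the set $S_{3,2}$ applied to $(Y_2,Y_3)$, so Proposition~\ref{prop3x3two} gives that $(A_2,A_3)$ and $(B_2,B_3)$ are not separated by any invariant in $\mathcal{O}(\N_3^2)^{GL_3}$.

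The third step verifies that the invariants in $P'_{3,3}=P_{3,3}\setminus S_{3,3}$ also agree on $\un A$ and $\un B$. Each element of $P'_{3,3}$ contains $Y_1$ as a factor and hence vanishes on $\un B$. On $\un A$, the identity $A_1^2=0$ kills every member of $P'_{3,3}$ in which $Y_1$ appears squared; the only polynomials left to check are
\begin{equation*}
\tr(A_2^2A_3^2A_1),\quad\tr(A_3^2A_2^2A_1),\quad\tr(A_2^2A_3^2A_2A_1),\quad\tr(A_3^2A_2^2A_3A_1),
\end{equation*}
each equal to the $(2,1)$-entry of the corresponding matrix product. Within each of the twelve type-pairs these four entries become polynomials in a small number of residual free parameters, and combining the $(2,1)$-vanishing constraints produced in step two with further $S_{3,3}$-equalities applied in a convenient order (exactly as in the proofs of Lemmas~\ref{lemmaA}--\ref{lemmaC}) should force them all to vanish.

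The main obstacle will be the combinatorial branching inside each type-pair: as in the earlier cases, each branches further on whether certain leading entries (such as $a_{27},a_{28},a_{37},a_{38},b_{28}$) vanish, and in every branch one must select an ordered sequence of $S_{3,3}$-equalities from which to solve. Organising these sub-cases efficiently, ideally exploiting the $Y_2\leftrightarrow Y_3$ symmetry to halve the work and dispatching the degenerate configurations ($A_2=0$ or $A_3=0$) at the outset via condition~\Ref{eqCond0}, will be the principal practical difficulty; however, no genuinely new geometric phenomenon is expected, since the rigidity coming from $A_1^2=0$ together with the step-two constraints already pins down the second row of $A_2^kA_3^\ell\cdots$ strongly enough for the four target identities to follow by direct substitution.
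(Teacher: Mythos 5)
Your skeleton is the same as the paper's, and the reduction in your steps two and three is correct and is in fact the heart of the matter: every element of $P'_{3,3}=P_{3,3}\setminus S_{3,3}$ involves $Y_1$, hence vanishes on $\un{B}$; on $\un{A}$ the relation $A_1^2=0$ kills all of them except $\tr(A_2^2A_3^2A_1)$, $\tr(A_3^2A_2^2A_1)$, $\tr(A_2^2A_3^2A_2A_1)$, $\tr(A_3^2A_2^2A_3A_1)$, which are $(2,1)$-entries of the corresponding products; and the $Y_1$-free elements of $P_{3,3}$ all lie in $S_{3,3}$, which is covered by the hypothesis. Note two simplifications you missed that follow from your own reduction: since the four surviving traces involve only $A_2,A_3$, the entries of $B_2,B_3$ never enter the argument, so putting $B_2$ in Jordan form is pointless and there are no ``twelve type-pairs'' — only the type of $A_2$ from Lemma~\ref{lemmaCan1} matters (and type $V_{\rm IV}$ is excluded outright, because $T_{12}$ gives $a_{24}=0$); likewise the appeal to Proposition~\ref{prop3x3two} is unnecessary, because the lemma only asserts non-separation by $P_{3,3}$, not by the whole invariant algebra.

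The genuine gap is that you never carry out the step that constitutes the lemma: you end with the assertion that the accumulated constraints ``should force them all to vanish.'' That is an expectation, not an argument, and all of the content of Lemma~\ref{lemmaD} (as of Lemmas~\ref{lemmaA}--\ref{lemmaC}) is exactly this verification. For the record, it is short, which is what the paper exploits: with $a_{24}=a_{34}=0$ from $T_{12},T_{13}$, the equality $T_{221}$ reads $(A_2^2)_{21}=a_{26}a_{27}=0$. If $a_{27}\neq 0$, then $A_2$ has type $V_{\rm III}$ and $a_{26}=0$, and $T_{132}$ reads $(A_3A_2)_{21}=a_{36}=0$; a direct multiplication then shows all four $(2,1)$-entries vanish (row $2$ of $A_2^2$ and of $A_3^2$ are multiples of $(0,1,0)$, while column $1$ of $A_2$, of $A_2^2$, and of $A_3$ have zero second entry). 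If instead $a_{27}=0$, then $A_2$ has type $V_{\rm I}$ or $V_{\rm II}$, and in both cases $A_2^2$ has only its first row nonzero, so three of the four entries vanish identically and the fourth, $(A_3^2A_2^2A_3)_{21}$, is a scalar multiple of $(A_3^2)_{21}=a_{36}a_{37}$, which is zero by $T_{331}$. Without some such explicit chain your write-up is a plan for a proof rather than a proof; with it, it coincides with the paper's.
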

\begin{proof} 
By Lemma~\ref{lemmaCan1}, we can assume that $A_2$ has one of the following types: $V_{\rm I},\ldots,V_{\rm IV}$.  Equalities  $T_{12}$ and $T_{13}$ imply that $a_{24}=a_{34}=0$. Then it follows from equality $T_{221}$ that $a_{26}=0$ or $a_{27}=0$.  

Assume $a_{26}= 0$ and $a_{27}\neq 0$. Thus the type of $A_2$ is $V_{\rm III}$. The equality  $T_{132}$ implies that $a_{36}=0$.  Hence $\un{A}$, $\un{B}$ are not separated by $P_{3,3}\backslash S_{3,3}$. 

Assume $a_{27}=0$. Thus the type of $A_2$ is either $V_{\rm I}$ or  $V_{\rm II}$. In both cases the equality $T_{331}$ implies that $a_{36}a_{37}=0$.  Hence $\un{A}$, $\un{B}$ are not separated by $P_{3,3}\backslash S_{3,3}$.  
\end{proof}

\begin{lemma}\label{lemmaE}
Consider $\un{A}=(J_2,A_2,A_3)$ and $\un{B}=(0,B_2,B_3)$ from $\N_3^3$ that are not separated by $S_{3,3}$.  Then $\un{A}$, $\un{B}$ are not separated by $P_{3,3}$.
\end{lemma}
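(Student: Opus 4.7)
The plan follows the template of Lemma~\ref{lemmaD}: apply Lemma~\ref{lemmaCan2} to bring $A_2$ into one of the canonical forms $W_{\rm I},\ldots,W_{\rm V}$, use the hypothesis $B_1=0$ (which makes every element of $P_{3,3}$ involving $Y_1$ vanish on $\un{B}$) together with the $S_{3,3}$-equalities to force $A_1,A_2,A_3$ to be simultaneously strictly upper triangular, and then conclude that every remaining invariant of $P_{3,3}$ also vanishes on $\un{A}$.

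The first step is to cut down the type of $A_2$. A direct computation of $T_{12}$, $T_{112}$, $T_{1122}$ gives $\tr(J_2 A_2)=a_{24}+a_{28}$, $\tr(E_{13}A_2)=a_{27}$, and (after using $a_{27}=0$) $\tr(E_{13}A_2^2)=(A_2^2)_{31}=a_{24}a_{28}$. Since the corresponding values on $\un{B}$ are $0$, we obtain $a_{24}+a_{28}=a_{27}=a_{24}a_{28}=0$, hence $a_{24}=a_{27}=a_{28}=0$. Inspecting Lemma~\ref{lemmaCan2}, each of the types $W_{\rm II},W_{\rm III},W_{\rm IV},W_{\rm V}$ requires at least one of these three entries to be nonzero, so $A_2$ must be of type $W_{\rm I}$, i.e., strictly upper triangular.

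The same recipe applied to $A_3$ via $T_{13}$, $T_{113}$, $T_{1133}$ yields $a_{34}+a_{38}=a_{37}=a_{34}a_{38}=0$, so $a_{34}=a_{37}=a_{38}=0$ and $A_3$ is upper triangular. Since $A_3\in\N_3$, the conditions $\tr(A_3^k)=0$ for $k=1,2,3$ force all its diagonal entries to vanish, so $A_3$ is strictly upper triangular as well.

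At this point $A_1=J_2$, $A_2$, and $A_3$ are simultaneously strictly upper triangular, hence every word in them is strictly upper triangular and has zero trace. Therefore every element of $P_{3,3}$ that involves $Y_1$ vanishes on $\un{A}$, matching its value $0$ on $\un{B}$. The remaining invariants in $P_{3,3}$ (namely $\tr(Y_2 Y_3)$, $\tr(Y_2^2 Y_3)$, $\tr(Y_2 Y_3^2)$, $\tr(Y_2^2 Y_3^2)$, $\tr(Y_2^2 Y_3^2 Y_2 Y_3)$, forming a copy of $S_{3,2}$) do not involve $Y_1$ and already belong to $S_{3,3}$, so they agree on $\un{A}$ and $\un{B}$ by hypothesis. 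I do not anticipate any real obstacle: once Lemma~\ref{lemmaCan2} and the $B_1=0$ vanishings are combined, the reduction to strict upper triangularity is immediate, and the only bookkeeping is checking the incompatibility of $W_{\rm II},\ldots,W_{\rm V}$ with $a_{24}=a_{27}=a_{28}=0$.
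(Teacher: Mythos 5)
Your proof is correct and takes essentially the same route as the paper's: both apply Lemma~\ref{lemmaCan2} to $A_2$, use the equalities $T_{12}$, $T_{112}$, $T_{1122}$ and $T_{13}$, $T_{113}$, $T_{1133}$ to force $a_{24}=a_{27}=a_{28}=0$ and $a_{34}=a_{37}=a_{38}=0$, and conclude that all three matrices of $\un{A}$ are strictly upper triangular, so every invariant of $P_{3,3}$ involving $Y_1$ vanishes on both tuples while the remaining ones lie in $S_{3,3}$ and agree by hypothesis. The only difference is cosmetic: you make explicit the step that nilpotency forces the diagonal of the upper-triangular $A_3$ to vanish, which the paper leaves implicit.
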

\begin{proof}
By Lemma~\ref{lemmaCan2}, we can assume that $A_2$ has one of the following types: $W_{\rm I},\ldots,W_{\rm V}$. Consequently considering equalities $T_{112}$, $T_{12}$, $T_{1122}$ we can see that $a_{27}=0$, $a_{28}=-a_{24}$, $a_{24}=0$, respectively. Then the type of $A_2$ is  $W_{\rm I}$. Consequently applying equalities $T_{13}$, $T_{113}$, $T_{1133}$ we obtain that $a_{38}=-a_{34}$, $a_{37}=0$, $a_{34}=0$, respectively. Hence $\un{A}$, $\un{B}$ are not separated by $P_{3,3}\backslash S_{3,3}$. 
\end{proof}

\end{document}